\def\ni{\noindent}
\def\beq{\arraycolsep1pt\begin{eqnarray*}}
\def\eeq{\end{eqnarray*}}
\newcommand{\HH}{{\cal H}}
\newcommand{\SH}{{\mathscr{H}}}
\newcommand{\R}{{\mathbb R}}
\newcommand\ov{\overline}
\newtheorem{Thm}{Theorem}[section]
\newtheorem{lem}[Thm]{Lemma}
\newtheorem{pro}[Thm]{Proposition}
\newtheorem{re}[Thm]{Remark}
\newtheorem{cor}[Thm]{Corollary}
\def\bea{\begin{eqnarray}}
\def\eea{\end{eqnarray}}
\numberwithin{equation}{section}
\date{}
\title{Landesman-Lazer conditions for systems involving twist and positively homogeneous Hamiltonian  systems}
\author{Natnael Gezahegn Mamo and Wahid  Ullah}
\begin{document}

\maketitle

\begin{abstract}
We present multiplicity results for the periodic and Neumann-type boundary value problems associated with coupled Hamiltonian systems. For the periodic problem, we couple a system having twist condition with another one  whose nonlinearity lies between the gradients of two positive and positively $2$-homogeneous Hamiltonain functions. Concerning the Neumann-type problem, we treat the same system without any twist assumption. We examine the cases of nonresonance, simple resonance, and double resonance by imposing some kind of Landesman--Lazer conditions. 
\end{abstract}

\section{Introduction }
The main motivation of this paper is arising from the two open problems given under a recent paper \cite[Section 5, Problems 2 and 4]{FonSfeToa2024Pre}.
To this aim, we first consider the periodic problem associated with a  system of the form
\begin{equation}\label{sys:original}
\begin{cases} 
\dot{x}= \nabla_y \mathcal{H}(t,x,y)+\nabla_y P(t,x,y,w)\,,\\  
\dot{y}= -\nabla_x \mathcal{H}(t,x,y)-\nabla_xP(t,x,y,w)\,, \\
J\dot{w} = F(t,w)+ \,\nabla_{w} P(t,x,y,w)\,,
\end{cases}
\end{equation}
where $F(t,w)$ is the gradient of a Hamiltonian function $K: \R \times \R^{2} \to \R$, i.e.,
$$
F(t,w) = \nabla_{w} K(t,w)\,,
$$
and $\mathcal{H}: \R \times \R^{2M} \to \R$. In addition, all the involved functions in~\eqref{sys:original} are continuous,  $T$-periodic in the variable $t$ and continuously differentiable with respect to $(x,y,w)$. We denote by $J=\begin{pmatrix}
    0 & -1\\
    1 & 0
\end{pmatrix}$ the standard symplectic matrix. 

\medbreak

To be more precise, the idea is to consider a Poincar\'e--Birkhoff situation for system
\begin{equation}\label{sys:twist}
\dot{x}= \nabla_y H(t,x,y)\,,\qquad  \dot{y}= -\nabla_x H(t,x,y)\,,
\end{equation}
while for system 
\begin{equation}\label{sys:phh}
J\dot{w} = F(t,w)\,,
\end{equation}
we assume that $F(t,w)$ is the combination of gradients of two positive and positively $2$-homogeneous Hamiltonian functions together with a perturbation term.

\medbreak

We now explain some basic preliminary concepts about positively homogeneous Hamiltonian systems. We refer the reader to~\cite{Fon2004JDE, FonGar2011JDE, FonUll2024DIE} for the detailed explanation. If $\mathscr{H} :\R^2 \to \R$ is a $C^1$-function satisfying 
\begin{equation}\label{homogenity}
    0< \mathscr{H}(\lambda w)= \lambda^2 \mathscr{H}( w),\quad \mbox{ for every }w\in\R^2 \backslash \{0\}\,\mbox{ and } \lambda >0\,,
\end{equation}
then $\mathscr{H}$ is said to be positively homogeneous of degree 2
and positive. In this setting, the origin is an isochronous center for the autonomous Hamiltonian system
\begin{equation}\label{system_autonomous}
J\dot{w} = \nabla \mathscr{H}(w),\quad \mbox{ for every } \, w \in \R^2\,,
\end{equation}
which means that aside from the origin, which is an equilibrium, all solutions of system~\eqref{system_autonomous} are periodic and have the same minimal period.

\medbreak 
 Notice that if $\mathscr{H}$ is positively homogeneous of degree $2$, then the Euler identity holds:
\begin{equation}\label{Euler_idty}
   \big\langle \nabla\mathscr{H}(w)\,,w
\big\rangle = 2\mathscr{H}(w),\quad\mbox{ for every }w\in \R^2\,. 
\end{equation}

\medbreak

In \cite{FonUre2017AIHPANL}, a higher dimensional version of the Poincar\'e--Birkhoff Theorem was studied for a periodic problem associated with the Hamiltonian system~\eqref{sys:twist}. Recently, several authors have discussed about the existence of multiple solutions for a coupled systems, where the coupling is between system~\eqref{sys:twist} with some other systems involving different types of structures, see for example~\cite{CheQia2022JDE, FonGarSfe2023JMAA, FonGid2020NODEA, FonUll2024JDE}. In~\cite{FonUll2024DIE, FonUll2024NODEA}, system~\eqref{sys:twist} is coupled with some isochronous centre by using a nonresonance condition. In~\cite{FonMamSfe2024Preprint}, the coupling was with a scalar second order equation, where the nonlinearity has an asymmetric behaviour combined with some Landesman--Lazer conditions. In that paper, nonresonance, simple resonance and double resonance situations were considered by implementing some kind of Landesman--Lazer conditions.

\medbreak

Let us explain what the Landesman--Lazer condition is. This condition has been introduced by Lazer and Leach~\cite{LazLea1969AMPA} for the periodic problem associated with a scalar second order ordinary differential equations of the form
\begin{equation}\label{Scalar ODE}
\ddot{u}+\sigma u +h(t,u)=0\,,
\end{equation}
where $h$ is continuous, uniformly bounded and $T$-periodic in $t$ with $\sigma=\big(\frac{2\pi n}{T}\big)^2$ for some positive integer $n$. In this setting they proved that a sufficient condition for the existence of a $T$-periodic solution is the following: for every non-zero $\eta$ satisfying $\ddot{\eta}+\sigma \eta=0$, we have 
\begin{equation}\label{eq:of:LL}
    \int_{\{\eta < 0\}}\limsup\limits_{u\to -\infty}h(t,u)\eta(t) dt + \int_{\{\eta > 0\}}\limsup\limits_{u\to +\infty}h(t,u)\eta(t) dt >0\,.
\end{equation} 
Notice that when $h(t,u)$ is increasing in $u$ this condition becomes a necessary and sufficient condition for the existence of a solution of~\eqref{Scalar ODE}. One year later, Landesman and Lazer~\cite{LanLaz1970JMM} introduced a similar condition for a Dirichlet problem associated with an elliptic operator. Since then,~\eqref{eq:of:LL} is referred to as Landesman--Lazer condition. This condition is crucial for  the nonlinearity to be kept sufficiently far from resonance. A lot of generalizations have been made by taking their work as a steeping stone, see for example~\cite{BreNir1978ASNSP, Dra1990JDE, FabFon1992NA, Maw1977CSMUB, Sfe2020TMNA}.

\medbreak 

The paper is organized as follows. In Section~\ref{main_result} we state our main results for the periodic problem concerning nonresonance, simple resonance and double resonance situations. The proofs of these results are provided in Section~\ref{Proofs}. Section~\ref{variants} is devoted to some variants of our results. The application will be given through some examples in Section~\ref{examples}. In Section~\ref{NBVP} we state and prove an analogue result concerning a Neumann-type boundary value problem associated with system \eqref{sys:original}, and finally we state some open problems in Section \ref{open}.

\section{Main results for the periodic problem}\label{main_result}
We first list all our main assumptions and then the corresponding results related to system~\eqref{sys:original}, where all the involved functions are continuous, $T$-periodic in the variable $t$ and continuously differentiable with respect to $(x,y,w)$. The proofs will be given in the next section.
\medbreak
Here are  our main assumptions. The first one is concerning the periodicity for the Hamiltonian function $\HH$. We write 
\begin{align*}
&x=(x_1 , \dots , x_M) \in\R^{M}, \quad y=(y_1 , \dots , y_M)\in\R^{M}.
\end{align*}

\medbreak

\noindent$A1.$ The function $\HH(t,x,y)$ is $2\pi$-periodic in each $x_i$ for $i=1, \dots, M$.

\medbreak

The following assumption is on the  periodicity and boundedness condition for the coupling function $P$.

\medbreak 

\noindent $A2.$ The function $P(t,x,y,w)$ is $2\pi$-periodic in each $x_i$ and has a bounded gradient with respect to $(x,y,w)$.
In particular, there exists a constant $\ov m$ such that 
$$
| \nabla_ w P(t,x,y,w)| \leq \ov m\,,
\qquad \text{for every $(t,x,y,w)$}\,.
$$

\medbreak

In order to introduce a { \em twist assumption}, adapted to our setting (cf.~\cite{FonGarSfe2023JMAA, FonUll2024DIE, FonUll2024NODEA}), we first consider the case when $\mathcal{D}$ is a rectangle in $\mathbb{R}^M$, i.e.,
$$
\mathcal{D}= [a_1, b_1] \times \cdots \times [a_M,b_M]\,,
$$
and we denote by $\mathring{\mathcal D}$ its interior.

\medbreak

\noindent $A3.$  There exists an $M$-tuple $\sigma=(\sigma_1 , \dots , \sigma_M) \in \{-1,1\}^{M}$ such that for every $C^1$-function $\mathcal{W}:[0,T] \to \R^{2}$, all the solutions $(x,y)$ of system
\begin{equation}\label{eq:of:hamiltonian:unperterbed:for:higher}
\begin{cases}
    \dot{x}= \nabla_{y} \mathcal{H}(t,x,y)+\nabla_{y}P(t,x,y,\mathcal{W}(t))\,,\\
    \dot{y}= -\nabla_{x} \mathcal{H}(t,x,y)-\nabla_{x}P(t,x,y,\mathcal{W}(t)) \,,   
\end{cases} 
\end{equation}
starting with $y(0)\in{\cal D}$ are defined on $[0,T]$, and for every $i=1, \dots, M$, we have
$$
\begin{cases}
y_{i}(0)=a_i \quad\Rightarrow\quad \sigma_{i}[x_{i}(T)-x_{i}(0)] < 0\,,\\
y_{i}(0)=b_i\quad\Rightarrow\quad \sigma_{i}[x_{i}(T)-x_{i}(0)] > 0\,.
\end{cases}
$$

\medbreak

We now introduce a structural assumption for the function $F$.

\medbreak

\noindent $A4.$ There are two functions $\gamma: \R\times \R^2 \to [0,1]$ and $Q: \R\times \R^2 \to \R$ such that
\begin{equation}\label{covex_comb.}
F(t,w)=(1-\gamma(t,w))\nabla \mathscr{H}_1 (w)+\gamma(t,w)\nabla \mathscr{H}_2 (w) + \nabla_w Q(t,w)\,,
\end{equation}
where all the functions are continuous, $T$-periodic in $t$, and $\nabla_w Q(t,w)$ is uniformly bounded, i.e., there exists $\widetilde{C}>0$ such that
\begin{equation}\label{eq:of:bdd:of:q}
|\nabla_w Q(t,w)| \leq  \widetilde{C}\,, \quad \hbox{ for every }\, (t,w) \in [0,T] \times \R^2\,.
\end{equation}

\medbreak

It is our aim to consider the case when $F(t,w)$ in some sense interacts asymptotically with the gradients of the two positive and positively $2$-homogeneous Hamiltonian functions $\mathscr{H}_1$ and $\mathscr{H}_2$ which satisfy~\eqref{homogenity} and 
\begin{equation}\label{H1_leq_H2}
    \mathscr{H}_1(w) \leq \mathscr{H}_2(w)\,,\quad \mbox{for every } w\in \R^2\,. 
\end{equation}

We notice that the function $F(t, \cdot)$ has at most linear growth, i.e., there exists $C>0$ such that
\begin{equation}\label{eq:of:lin:growth}
    |F(t,w)| \leq C (1+ |w|) \quad \hbox{ for every } w \in \R^2\,.
\end{equation}

We fix $\varphi$ and $\psi$ such that
$$
J \dot{\varphi}= \nabla \SH_1(\varphi), \quad J \dot{\psi}= \nabla \SH_2(\psi)\,,
$$
and
\begin{equation}\label{half}
    \mathscr{H}_1 (\varphi (t))= \mathscr{H}_2 (\psi (t))=\frac{1}{2}\,,\quad \mbox{ for every }t\in [0,T].
\end{equation}
The choice is not restrictive.
\medbreak
Notice that once we fixed $\varphi$ and $\psi$, all the non-zero solutions of
$$
J\dot{w}=\nabla \mathscr{H}_1(w)\quad \mbox{and} \quad J\dot{ \widetilde{w}}=\nabla \mathscr{H}_2( \widetilde{w})\,,
$$
respectively, are of the form $ w(t)= R\, \varphi(t+s)$ for all $s \in [0, \tau_1[ $ and $\widetilde{w}(t)=  \widetilde{R}\, \psi(t+s) $ for all $s \in [0, \tau_2[ $ with some positive constants $R$ and $\widetilde{R}$, where $\tau_1$ and $\tau_2$ represent their minimal periods, respectively. 

\medbreak

Here is our first main result which is associated with a nonresonant situation.
\begin{Thm}\label{Thm_nonres}
    Let $A1-A4$ hold true, and assume that there exists a positive integer $N$ such that
    \begin{equation}\label{nonres_condition}
\frac{T}{N+1}< \tau_2 \leq \tau_1 < \frac{T}{N}\,.
    \end{equation}
    Then system~\eqref{sys:original} has at least $M+1$ geometrically distinct $T$-periodic solutions, with $y(0)\in \mathring{\mathcal{D}}$.
\end{Thm}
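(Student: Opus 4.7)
The plan is to apply a higher-dimensional Poincar\'e--Birkhoff theorem in the style of~\cite{FonUre2017AIHPANL}, adapted to coupled Hamiltonian systems as in~\cite{FonUll2024DIE, FonUll2024NODEA}, to the Poincar\'e map of~\eqref{sys:original}. The twist on the $(x,y)$-slice is supplied directly by $A3$, so the real work is to convert the nonresonance condition~\eqref{nonres_condition} into the kind of control on the $w$-component that the theorem demands.

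The first step is to establish a uniform a priori bound $\|w\|_\infty \le R_0$ for every $T$-periodic solution $(x,y,w)$ of~\eqref{sys:original}. I would argue by contradiction: assume a sequence of $T$-periodic solutions $(x_n,y_n,w_n)$ with $\|w_n\|_\infty\to\infty$ and set $v_n:=w_n/\|w_n\|_\infty$. Using the positive $2$-homogeneity of $\mathscr{H}_1,\mathscr{H}_2$ (so that $\nabla\mathscr{H}_i$ is positively $1$-homogeneous), the equation rescales to
\[
J\dot v_n = (1-\gamma(t,w_n))\nabla\mathscr{H}_1(v_n)+\gamma(t,w_n)\nabla\mathscr{H}_2(v_n)+\frac{1}{\|w_n\|_\infty}\bigl(\nabla_w Q(t,w_n)+\nabla_w P(t,x_n,y_n,w_n)\bigr).
\]
By $A2$, \eqref{eq:of:bdd:of:q} and \eqref{eq:of:lin:growth}, the right-hand side is uniformly bounded, so up to a subsequence $v_n\to v_\infty$ in $C([0,T])$ with $\|v_\infty\|_\infty=1$, while $\gamma(\cdot,w_n)\rightharpoonup\gamma_\infty(\cdot)$ weakly-$*$ in $L^\infty$ for some $\gamma_\infty(t)\in[0,1]$. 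The affine dependence on $\gamma$ combined with the uniform convergence of $\nabla\mathscr{H}_i(v_n)$ allows one to pass to the limit and obtain a nontrivial $T$-periodic solution $v_\infty$ of
\[
J\dot v_\infty = (1-\gamma_\infty(t))\nabla\mathscr{H}_1(v_\infty)+\gamma_\infty(t)\nabla\mathscr{H}_2(v_\infty).
\]
Standard rotation-number comparison arguments for planar positively $2$-homogeneous Hamiltonians, together with~\eqref{H1_leq_H2} and the choices in~\eqref{half}, show that under~\eqref{nonres_condition} every nontrivial solution of this limit equation performs a number of rotations around the origin lying strictly in $(N,N+1)$ on $[0,T]$, which forbids $T$-periodicity and gives the desired contradiction.

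With the a priori bound in hand, I would modify $F$ and the coupling terms outside a ball $B_{R_1}$ with $R_1>R_0$, extending them by globally Lipschitz, Hamiltonian data agreeing with the original ones on $B_{R_0}$; by the bound just established, the modified system has the same $T$-periodic solutions as~\eqref{sys:original}, and its Poincar\'e map $\Psi$ is everywhere defined on $\R^M\times\mathcal{D}\times\R^2$. The twist of $A3$ on the $(x,y)$-factor together with the absence of resonance on the $w$-factor then fits the hypotheses of the higher-dimensional Poincar\'e--Birkhoff theorem, producing at least $M+1$ fixed points of $\Psi$ that are geometrically distinct modulo the $2\pi\Z^M$-translations in $x$ permitted by $A1$, with $y(0)\in\mathring{\mathcal D}$.

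The main obstacle is the limiting step in the a priori bound: $\gamma(t,w_n)$ need not converge pointwise, so one must exploit its affine appearance in the equation together with the uniform convergence of $v_n$ to justify the weak limit, and then carry out the rotation-number bookkeeping in the presence of a merely measurable weight $\gamma_\infty\in[0,1]$. Once the bound is in place, the cutoff and the invocation of the Poincar\'e--Birkhoff theorem follow the scheme already developed in~\cite{FonUll2024DIE, FonUll2024NODEA}.
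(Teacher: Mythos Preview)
Your strategy matches the paper's: normalize a hypothetical blowing-up sequence, pass to a limit equation that is a measurable convex combination of $\nabla\mathscr{H}_1$ and $\nabla\mathscr{H}_2$, and use~\eqref{nonres_condition} to exclude nontrivial $T$-periodic limits. The paper invokes \cite[Lemma~2.3]{FonGar2011JDE} rather than a direct rotation-number count, but the content is the same, and the final step is precisely an appeal to \cite[Theorem~1.1]{FonUll2024DIE}.

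There is, however, a logical gap in the order you propose. You establish the a~priori bound for $T$-periodic solutions of the \emph{original} system, then modify outside $B_{R_1}$ and assert that ``the modified system has the same $T$-periodic solutions''. One inclusion is clear, but a $T$-periodic solution of the \emph{modified} system that leaves $B_{R_1}$ is not governed by your bound, so you cannot yet conclude it solves~\eqref{sys:original}. The paper avoids this by reversing the order: it first introduces a one-parameter family of modifications $F_\rho$ (via a cutoff $\eta_\rho$ that interpolates $\varPhi$ with $\tfrac12(\mathscr{H}_1+\mathscr{H}_2)$, so that the convex-combination structure of $A4$ is preserved together with an extra term $v_\rho$ which is small for $\rho$ large), and then proves the a~priori bound uniformly for the $T$-periodic solutions of the \emph{modified} systems with $\rho>\bar\rho$. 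Only then is the Hamiltonian rewritten as $\mathcal{H}+\tfrac12(\mathscr{H}_1+\mathscr{H}_2)+\widetilde P_\rho$, with $\widetilde P_\rho$ of bounded gradient, and \cite[Theorem~1.1]{FonUll2024DIE} applied. Your sketch can be repaired by choosing a modification that still has the form~\eqref{covex_comb.} (plus bounded remainder) and rerunning the a~priori argument on it---but that is exactly the paper's construction.
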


Due to the $2\pi$-periodicity assumption of $\mathcal{H}$ in the variable $x_i$, if $(x(t),y(t))$ is a $T$-periodic  solution of system~\eqref{sys:twist}, then we can find infinitely many others by adding an integer multiple of $2\pi$ to $x_i(t)$. If one is not obtained from the other in this way, then we call the two $T$-periodic solutions  geometrically distinct.

\medbreak

We now consider the situation where resonance can occur from below and for this we state the following assumption.

\medbreak

\noindent $A5.$ For every $\theta  \in[0,T]$, one has
\begin{multline}\label{LL_1a}
\int_{0}^{T}\liminf\limits_{(\lambda,\,s)\to(+\infty,\,\theta)}\big[\big\langle F\big(t,\lambda \varphi(t+s)\big),\,\varphi(t+s) \big\rangle -2\lambda \mathscr{H}_1\big(\varphi(t)\big)\big] \,dt\\
>\ov{m}\int_{0}^{T}|\varphi(t)| dt\,.
\end{multline}

\begin{Thm}\label{Thm_simple_res:below}
    Let $A1-A5$ hold true, and assume that there exists a positive integer $N$ such that
    \begin{equation}\label{nonres_condition:below}
\frac{T}{N+1}< \tau_2 \leq \tau_1 = \frac{T}{N}\,.
    \end{equation}
  Then the same conclusion of Theorem~\ref{Thm_nonres} holds.
\end{Thm}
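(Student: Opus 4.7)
The plan is to reduce Theorem~\ref{Thm_simple_res:below} to the nonresonant Theorem~\ref{Thm_nonres} via a perturbation-plus-limit argument, using $A5$ to control the $w$-component as the perturbation parameter vanishes. For $\varepsilon>0$ small, I would perturb $F$ so as to shift its asymptotic behavior slightly toward $\mathscr{H}_2$ (for instance by replacing $\gamma$ with $\min\{\gamma+\varepsilon,1\}$, or equivalently by modifying $\mathscr{H}_1$ to some $\mathscr{H}_1^\varepsilon\le\mathscr{H}_2$ with $\tau_1^\varepsilon\in(\tau_2,T/N)$ and $\mathscr{H}_1^\varepsilon\to\mathscr{H}_1$ in $C^1$). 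The perturbed system satisfies $A1$--$A4$ together with the nonresonance condition~\eqref{nonres_condition}, so Theorem~\ref{Thm_nonres} produces $M+1$ geometrically distinct $T$-periodic solutions $z^\varepsilon=(x^\varepsilon,y^\varepsilon,w^\varepsilon)$ with $y^\varepsilon(0)\in\mathring{\mathcal{D}}$.

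The heart of the proof is an a priori bound $\|w^\varepsilon\|_\infty\le R_0$ uniform in $\varepsilon$. Suppose, for contradiction, $\lambda^\varepsilon:=\|w^\varepsilon\|_\infty\to+\infty$, and set $v^\varepsilon:=w^\varepsilon/\lambda^\varepsilon$. Using the positive $2$-homogeneity of the dominant terms of $F^\varepsilon$ and the uniform boundedness of $\nabla_w P$ and $\nabla_w Q$, I obtain
\[
J\dot v^\varepsilon = (1-\gamma_\varepsilon)\nabla\mathscr{H}_1^\varepsilon(v^\varepsilon)+\gamma_\varepsilon\nabla\mathscr{H}_2(v^\varepsilon)+O(1/\lambda^\varepsilon),\qquad \|v^\varepsilon\|_\infty=1,
\]
with $\gamma_\varepsilon(t)=\gamma(t,\lambda^\varepsilon v^\varepsilon(t))\in[0,1]$. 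By Arzel\`a--Ascoli a subsequence $v^\varepsilon\to v^*$ uniformly. The period constraint $\tau_2\le\tau_1^\varepsilon<T/N$ together with the $T$-periodicity of $v^*$ then forces $v^*(t)=R\,\varphi(t+s^*)$ for suitable $R>0$ and $s^*\in[0,T]$; hence $w^\varepsilon(t)=\lambda^\varepsilon\varphi(t+s_\varepsilon)+o(\lambda^\varepsilon)$ with $s_\varepsilon\to s^*$.

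To derive the contradiction, I would take the $L^2$-pairing of the third equation of the perturbed system with $\varphi(\cdot+s_\varepsilon)$, integrate by parts using $T$-periodicity of both $w^\varepsilon$ and $\varphi$, and invoke the Euler identity~\eqref{Euler_idty} together with~\eqref{half}. Controlling the perturbation $F^\varepsilon-F$ (of order $O(\varepsilon\lambda^\varepsilon)$) and the remainder $w^\varepsilon-\lambda^\varepsilon\varphi(\cdot+s_\varepsilon)$ (of order $o(\lambda^\varepsilon)$), one arrives at
\[
\int_0^T\!\!\bigl[\langle F(t,\lambda^\varepsilon\varphi(t+s_\varepsilon)),\varphi(t+s_\varepsilon)\rangle-2\lambda^\varepsilon\mathscr{H}_1(\varphi(t))\bigr]\,dt \le \overline{m}\!\int_0^T\!|\varphi(t)|\,dt+o(\lambda^\varepsilon)+O(\varepsilon\lambda^\varepsilon).
\]
Passing to the liminf along a subsequence with $\varepsilon\lambda^\varepsilon\to 0$ directly contradicts~\eqref{LL_1a} evaluated at $\theta=s^*$.

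Equipped with the uniform bound, Arzel\`a--Ascoli provides $C^1$-convergent subsequences of $z^\varepsilon$ whose limits are $T$-periodic solutions of the original system~\eqref{sys:original}; the twist assumption $A3$ keeps $y(0)\in\mathring{\mathcal{D}}$ in the limit, and geometric distinctness of the $M+1$ limits is inherited by a topological-index argument analogous to the one used for Theorem~\ref{Thm_nonres}, exploiting $A1$. The main obstacle is the combination of the second and third steps: identifying the blow-up profile $v^*$ as a single orbit $R\,\varphi(\cdot+s^*)$ (ruling out mixed limits involving $\mathscr{H}_2$-dynamics) and simultaneously managing the two small parameters $\varepsilon$ and $1/\lambda^\varepsilon$ so that the error in the tested identity is genuinely negligible against the margin provided by $A5$.
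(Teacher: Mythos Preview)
Your perturbation-plus-limit strategy is genuinely different from the paper's route and contains two gaps that are not closed.

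The paper does \emph{not} perturb $F$ and pass to a limit. It keeps $F$ fixed and simply runs the machinery of Theorem~\ref{Thm_res}: one truncates $\varPhi$ at scale $\rho$ to obtain $\varPhi_\rho$, proves an a priori bound for the truncated system~\eqref{sys_modified} via exact generalized polar coordinates $w_n(t)=r_n(t)\varphi(t+\theta_n(t))$ and Fatou (Proposition~\ref{pro:for:priori:res}), and then applies \cite[Theorem~1.1]{FonUll2024DIE} once to the truncated system. The only ``perturbation'' is replacing $\mathscr{H}_2$ by $\widetilde{\mathscr{H}}_2=\mathscr{H}_2+\epsilon|w|^2$ so that $\mathscr{H}_1<\widetilde{\mathscr{H}}_2$ strictly and $\widetilde\tau_\psi\in(T/(N+1),T/N)$. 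This alters only the truncation at infinity, not the equation in the region $|w|\le\rho$; so once the a priori bound $\|w\|_\infty\le\bar\rho$ holds, the $M+1$ solutions of the modified system are automatically solutions of~\eqref{sys:original}. No limit in $\varepsilon$ is ever taken.

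Your scheme, by contrast, changes the system to $J\dot w=F^\varepsilon+\nabla_wP$ with $F^\varepsilon-F=O(\varepsilon|w|)$, and this creates two problems you flag but do not resolve. (i) In your tested identity the error $O(\varepsilon\lambda^\varepsilon)$ need not be small: nothing forces $\varepsilon\lambda^\varepsilon\to0$ along the blow-up sequence, and the additional $o(\lambda^\varepsilon)$ term coming from the approximation $w^\varepsilon\approx\lambda^\varepsilon\varphi(\cdot+s_\varepsilon)$ (rather than exact polar coordinates) may also diverge; thus the inequality cannot be closed against the \emph{fixed} margin in~\eqref{LL_1a}. (ii) Even granting a uniform bound, passing to the limit $\varepsilon\to0$ does not preserve the conclusion: distinct solutions $z^\varepsilon$ can coalesce, and $y^\varepsilon(0)\in\mathring{\mathcal D}$ can drift to $\partial\mathcal D$. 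The ``topological-index argument analogous to the one used for Theorem~\ref{Thm_nonres}'' you invoke is not available---the proof of Theorem~\ref{Thm_nonres} applies \cite[Theorem~1.1]{FonUll2024DIE} to a single modified system, with no limiting procedure.
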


\medbreak

Symmetrically, we can discuss the situation of simple resonance from above, and for this we state the following assumption.

\medbreak

\noindent $A6.$
For every $\theta  \in[0,T]$, one has
\begin{multline}\label{LL_1b}
\int_{0}^{T}\liminf\limits_{(\lambda,\,s)\to(+\infty,\,\theta)}\big[2\lambda \mathscr{H}_2\big(\psi(t)\big)-\big\langle F\big(t,\lambda \psi(t+s)\big),\,\psi(t+s) \big\rangle \big] \,dt\\
>\ov{m}\int_{0}^{T}|\psi(t)| dt\,.
\end{multline}

\begin{Thm}\label{Thm_simple_res:above}
    Let $A1-A4$, $A6$ hold true, and assume that there exists a positive integer $N$ such that
    \begin{equation}\label{nonres_condition:above}
\frac{T}{N+1}= \tau_2 \leq \tau_1 < \frac{T}{N}\,.
    \end{equation}
   Then the same conclusion of Theorem~\ref{Thm_nonres} holds.
\end{Thm}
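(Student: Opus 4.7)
The argument follows closely the proof of Theorem~\ref{Thm_simple_res:below}, interchanging the roles of $\mathscr{H}_1$ and $\mathscr{H}_2$ so as to handle resonance at the upper edge of the admissible period gap. The plan is first to establish a uniform a priori bound $|w(t)|\leq R_0$ for every $T$-periodic solution of~\eqref{sys:original}; once this is available, truncating the nonlinearities outside $\{|w|\leq R_0\}$ and applying the higher-dimensional Poincar\'e--Birkhoff theorem of~\cite{FonUre2017AIHPANL}, whose twist hypothesis is furnished by $A3$, yields the $M+1$ geometrically distinct $T$-periodic solutions with $y(0)\in\mathring{\mathcal D}$.

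For the a priori bound, suppose by contradiction a sequence $(x_n,y_n,w_n)$ of $T$-periodic solutions with $\lambda_n:=\|w_n\|_\infty\to+\infty$. The linear growth~\eqref{eq:of:lin:growth} and the bound on $\nabla_w P$ from $A2$ yield via Gronwall that $v_n:=w_n/\lambda_n$ is equibounded in $C^1$; along a subsequence $v_n\to v$ uniformly with $\|v\|_\infty=1$. Using~\eqref{covex_comb.} and the $2$-homogeneity of $\mathscr{H}_1,\mathscr{H}_2$, passing to the limit in the rescaled equation shows that $v$ solves
$$
J\dot v=(1-\mu(t))\nabla\mathscr{H}_1(v)+\mu(t)\nabla\mathscr{H}_2(v)
$$
for some measurable $\mu:[0,T]\to[0,1]$ arising as the weak-$*$ limit of $\gamma(t,w_n)$. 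Thus $v$ never vanishes, and comparing its angular velocity in polar coordinates with those of the autonomous $\mathscr{H}_j$-systems forces the integer rotation number of $v$ to lie in $[T/\tau_1,T/\tau_2]\subset(N,N+1]$, hence to equal $N+1$ by~\eqref{nonres_condition:above}. Strict monotonicity of the angular velocity in $\mu$ then gives $\mu\equiv 1$ a.e., so $v(t)=\psi(t+s)$ for some phase $s$; we may thus write $w_n(t)=\lambda_n\psi(t+s_n)+\lambda_n\epsilon_n(t)$ with $s_n\to s$ and $\epsilon_n\to 0$ uniformly.

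To close the contradiction, test the equation for $w_n$ against $\psi(t+s_n)$ and integrate by parts over $[0,T]$. Using $J\dot\psi=\nabla\mathscr{H}_2(\psi)$, the Euler identity~\eqref{Euler_idty}, the normalization~\eqref{half}, and the fact that $T=(N+1)\tau_2$ is a period of $\psi$, one obtains
$$
\int_0^T\bigl[\bigl\langle w_n,\nabla\mathscr{H}_2(\psi(t+s_n))\bigr\rangle-\bigl\langle F(t,w_n),\psi(t+s_n)\bigr\rangle\bigr]\,dt=\int_0^T\bigl\langle\nabla_w P,\psi(t+s_n)\bigr\rangle\,dt,
$$
whose right-hand side is bounded in modulus by $\ov m\int_0^T|\psi(t)|\,dt$. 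By the $2$-homogeneity of $\mathscr{H}_2$ and the uniform convergence $v_n\to\psi(\,\cdot+s)$, the first term on the left is asymptotic to $\int_0^T 2\lambda_n\mathscr{H}_2(\psi(t))\,dt$; applying Fatou's lemma to the resulting integrand and invoking the joint liminf in~\eqref{LL_1b} at $\theta=s$ then delivers a contradiction with the above bound.

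The main obstacle lies in this last step: one must compare $F(t,w_n)$ with $F(t,\lambda_n\psi(t+s_n))$ and simultaneously control the approximation in the $\langle w_n,\nabla\mathscr{H}_2(\psi)\rangle$-term, even though $w_n-\lambda_n\psi(t+s_n)=\lambda_n\epsilon_n(t)$ need not be pointwise small. This is handled as in the proof of Theorem~\ref{Thm_simple_res:below}: the pointwise a.e.\ convergence $\gamma(t,w_n)\to 1$ extracted from the weak-$*$ limit $\mu\equiv 1$, combined with the $2$-homogeneity of the $\mathscr{H}_j$'s and the decomposition~\eqref{covex_comb.}, shows that these approximation errors are compatible with the joint liminf in~\eqref{LL_1b}, so that the Landesman--Lazer condition may be applied in the limit.
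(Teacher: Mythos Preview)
Your overall strategy---normalize, pass to the limit, and contradict the Landesman--Lazer condition---parallels the paper's, but the closing step has a genuine gap. After testing against $\psi(\cdot+s_n)$ you obtain an integrand involving $\langle F(t,w_n),\psi(t+s_n)\rangle$, whereas condition $A6$ concerns $F$ evaluated at arguments of the \emph{exact} form $\lambda\psi(t+s)$. Since $w_n(t)-\lambda_n\psi(t+s_n)=\lambda_n\epsilon_n(t)$ is only $o(\lambda_n)$ and not bounded, continuity of $F$ does not permit the substitution. Your proposed fix---``pointwise a.e.\ convergence $\gamma(t,w_n)\to1$ extracted from the weak-$*$ limit $\mu\equiv1$''---is invalid: weak-$*$ convergence of a bounded sequence to a constant implies no pointwise convergence whatsoever (consider $\sin(nt)\rightharpoonup 0$). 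The paper circumvents this by writing $w_n(t)=r_n(t)\psi\bigl(t+\theta_n(t)\bigr)$ in \emph{generalized polar coordinates} with a time-dependent phase; integrating $\dot\theta_n$ over $[0,T]$ then produces an integrand containing $F\bigl(t,r_n(t)\psi(t+\theta_n(t))\bigr)$ exactly in the form required by~\eqref{LL_1b}, with $r_n(t)\to+\infty$ and $\theta_n(t)\to\theta_0$ uniformly, so Fatou applies directly.

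There is also a structural issue. An a priori bound for $T$-periodic solutions of the \emph{original} system~\eqref{sys:original} is not enough: you must first truncate to a system for which an existence theorem is available, and then prove the bound \emph{uniformly in the truncation parameter}. This is why the paper introduces $\varPhi_\rho$ (equal to $\tfrac12[\mathscr H_1+\mathscr H_2]$ for $|w|\geq\rho^3$) and proves the bound for solutions of the modified system~\eqref{sys_modified}; only then does~\cite[Theorem~1.1]{FonUll2024DIE} apply (note that~\cite{FonUre2017AIHPANL} alone does not handle the coupled structure). The paper's actual proof of Theorem~\ref{Thm_simple_res:above} is in fact a short reduction: replace $\mathscr H_1$ by $\widetilde{\mathscr H}_1=\mathscr H_1-\epsilon|w|^2$ so that $\widetilde{\mathscr H}_1<\mathscr H_2$ strictly while the associated period $\widetilde\tau_\varphi$ still satisfies $\tau_1<\widetilde\tau_\varphi<T/N$, and then invoke the double-resonance argument of Theorem~\ref{Thm_res}. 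The strict inequality is needed there to obtain the lower bounds (Case~2 in the proof of Proposition~\ref{pro:for:priori:res}) that justify Fatou's lemma.
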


\medbreak
Concerning the double resonance case, we have the following result.
\begin{Thm}\label{Thm_res}
    Let $A1-A6$ hold true, and $\SH_1(w) < \SH_2(w)$ for all $w \in \R^2 \setminus \{0\}$. Assume moreover that there exists a positive integer $N$ such that
    \begin{equation}\label{res_condition}
\tau_2=\frac{T}{N+1} \quad \hbox{ and } \quad \tau_1 = \frac{T}{N}\,.
    \end{equation}  
    Then the same conclusion of Theorem~\ref{Thm_nonres} holds.
\end{Thm}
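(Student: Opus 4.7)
The plan is to follow the scheme already used for Theorems~\ref{Thm_nonres}--\ref{Thm_simple_res:above}: first obtain an \emph{a priori} bound on the $w$-component of every $T$-periodic solution of system~\eqref{sys:original}, and then apply the higher-dimensional Poincar\'e--Birkhoff theorem of~\cite{FonUre2017AIHPANL} to the $(x,y)$-component, exploiting the twist assumption $A3$ on the rectangle $\mathcal{D}$ and the $2\pi$-periodicity of $\HH$ in each $x_i$ to yield $M+1$ geometrically distinct solutions. The genuinely new difficulty of the double-resonance case is concentrated in the \emph{a priori} bound, where both Landesman--Lazer conditions $A5$ and $A6$ must be invoked simultaneously, together with the strict inequality $\SH_1<\SH_2$ on $\R^2\setminus\{0\}$.

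\medbreak

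For this bound I would argue by contradiction, taking a sequence of $T$-periodic solutions $(x_n,y_n,w_n)$ with $\lambda_n:=\|w_n\|_\infty\to+\infty$, and setting $v_n:=w_n/\lambda_n$. The linear growth~\eqref{eq:of:lin:growth} and the uniform bounds on $\nabla_w P$ and $\nabla_w Q$ from $A2$ and $A4$ yield a uniform $C^1$-bound on $v_n$, so Arzel\`a--Ascoli provides $v_n\to v$ in $C^0([0,T],\R^2)$ along a subsequence, with $\|v\|_\infty=1$; weak-$*$ compactness in $L^\infty$ extracts a limit $\widetilde\gamma(t)\in[0,1]$ of $\gamma(t,w_n)$. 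Dividing the third equation of~\eqref{sys:original} by $\lambda_n$ and exploiting the positive $1$-homogeneity of $\nabla\SH_1,\nabla\SH_2$ that follows from~\eqref{homogenity}, the limit $v$ satisfies
\[
J\dot v \;=\; (1-\widetilde\gamma(t))\,\nabla\SH_1(v) + \widetilde\gamma(t)\,\nabla\SH_2(v), \qquad v\not\equiv 0.
\]

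\medbreak

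Under the resonance identity~\eqref{res_condition} together with the strict separation $\SH_1<\SH_2$, a rotation-number comparison in the spirit of~\cite{Fon2004JDE, FonGar2011JDE} forces the $T$-periodic limit $v$ to collapse to one of the two extremal cases: either $\widetilde\gamma\equiv 0$, giving $v(t)=R\,\varphi(t+s_\infty)$ for some $R>0$ and some phase $s_\infty$, or $\widetilde\gamma\equiv 1$, giving $v(t)=R\,\psi(t+s_\infty)$. To dispose of each alternative, I would test the third equation of~\eqref{sys:original} against $\varphi(\cdot+s_n)$ (respectively $\psi(\cdot+s_n)$) with $s_n\to s_\infty$. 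Using $J\dot\varphi=\nabla\SH_1(\varphi)$, an integration by parts whose boundary terms vanish since $T=N\tau_1$, the Euler identity~\eqref{Euler_idty}, and the asymptotic $w_n=\lambda_n R\,\varphi(\cdot+s_n)+o(\lambda_n)$, the equation rearranges to
\[
\int_0^T\bigl[\langle F(t,w_n),\varphi(t+s_n)\rangle - 2\lambda_n R\,\SH_1(\varphi(t))\bigr]\,dt \;=\; -\int_0^T\langle\nabla_w P,\varphi(t+s_n)\rangle\,dt + o(1),
\]
whose right-hand side is uniformly bounded in absolute value by $\ov m\int_0^T|\varphi(t)|\,dt$ by virtue of $A2$. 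Taking $\liminf$ as $n\to\infty$ and invoking Fatou with $A5$ contradicts this inequality; the symmetric computation with $\psi$ and $A6$ rules out the second alternative.

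\medbreak

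With the \emph{a priori} bound $\|w\|_\infty\leq R^*$ in hand, a standard cut-off of $P$ outside a ball of radius larger than $R^*$ produces a modified system with identical $T$-periodic solutions but compactly supported coupling in $w$, to which the higher-dimensional Poincar\'e--Birkhoff framework applies as in the previous theorems, yielding the $M+1$ geometrically distinct $T$-periodic solutions with $y(0)\in\mathring{\mathcal D}$. The main obstacle I foresee is the dichotomy step: justifying that no intermediate profile $\widetilde\gamma$ can support a nontrivial $T$-periodic limit under~\eqref{res_condition}, which is exactly where the strict inequality $\SH_1<\SH_2$ on $\R^2\setminus\{0\}$ enters in a crucial way, and where care must be taken to upgrade the pointwise asymptotic $w_n\sim\lambda_n R\,\varphi(\cdot+s_n)$ to a substitution in the nonlinearity $F(t,\cdot)$ that is compatible with the $\liminf$ in $A5$ and $A6$.
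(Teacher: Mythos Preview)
Your overall strategy---normalize, pass to the limit, invoke the dichotomy lemma of~\cite{FonGar2011JDE}, extract a contradiction from $A5$/$A6$, then apply a Poincar\'e--Birkhoff-type theorem---matches the paper exactly. The dichotomy step is correct and is precisely~\cite[Lemma~2.3]{FonGar2011JDE}. However, two of your steps have genuine gaps.

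\medbreak

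\textbf{The testing step.} Your integration-by-parts identity against $\varphi(\cdot+s_n)$ does not produce an $o(1)$ remainder: the left-hand side of your tested equation is $\int_0^T\langle w_n,\nabla\SH_1(\varphi(t+s_n))\rangle\,dt=\lambda_n\int_0^T\langle v_n,\nabla\SH_1(\varphi(t+s_n))\rangle\,dt$, and since you only know $v_n\to R\varphi(\cdot+s_\infty)$ in $C^0$, the discrepancy with $\lambda_n RT$ is $o(\lambda_n)$, not $o(1)$. More seriously, condition $A5$ is stated for arguments of the exact form $\lambda\varphi(t+s)$, whereas your integrand contains $F(t,w_n)$ with $w_n$ merely $o(\lambda_n)$-close to such a profile; because $F$ involves the nonlinear factor $\gamma(t,w)$, you cannot simply substitute. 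You correctly flag this as ``the main obstacle'', and it is precisely what the paper resolves by a different device: it writes $w_n(t)=r_n(t)\,\varphi(t+\theta_n(t))$ in \emph{generalized polar coordinates}, so that the argument of $F$ is \emph{exactly} $r_n(t)\varphi(t+\theta_n(t))$. The angular equation then gives an \emph{exact} identity (since $\theta_n(0)=\theta_n(T)$ once one knows $w_n$ makes $N$ turns), namely
\[
0=\int_0^T\frac{\langle F_{\rho_n}(t,r_n\varphi(t+\theta_n)),\varphi(t+\theta_n)\rangle-r_n}{r_n}\,dt+\int_0^T\frac{\langle\nabla_wP,\varphi(t+\theta_n)\rangle}{r_n}\,dt,
\]
to which Fatou applies after checking the integrand is bounded below (this lower bound uses $\SH_2\ge\SH_1$ via Euler's identity, and in the modified region it uses the strict inequality $\SH_1<\SH_2$). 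The polar-coordinate representation, not a linear test function, is the missing ingredient.

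\medbreak

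\textbf{The final modification.} Cutting off only $P$ is not what is done ``in the previous theorems'' and is not sufficient here either. The Poincar\'e--Birkhoff result actually invoked is~\cite[Theorem~1.1]{FonUll2024DIE}, which requires the $w$-part of the Hamiltonian to be a \emph{single} positive, positively $2$-homogeneous function plus a bounded-gradient remainder. The original $F$ is a $t$- and $w$-dependent convex combination of $\nabla\SH_1$ and $\nabla\SH_2$, which is not of that form. The paper therefore modifies $\varPhi$ itself (via the cut-off $\eta_\rho$) so that for $|w|\ge\rho^3$ one has $\varPhi_\rho=\tfrac12(\SH_1+\SH_2)$; the modified Hamiltonian then splits as $\HH+\tfrac12(\SH_1+\SH_2)+\widetilde P_\rho$ with $\nabla\widetilde P_\rho$ bounded, and $\tfrac12(\SH_1+\SH_2)$ has a period strictly between $\tau_2$ and $\tau_1$, hence is nonresonant. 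The a~priori bound must accordingly be proved for this \emph{modified} system (with a sequence $\rho_n\to\infty$), so that its $T$-periodic solutions coincide with those of the original one; your bound for the original system alone does not close the argument.
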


\section{Proofs of the main results}\label{Proofs}

We first modify the original system, adapted to our setting (cf.~\cite{FonSfeToa2024Pre}) as follows. From $A4$, and the fact that $F(t,w)$ is the gradient of a Hamiltonian function $K$, we can find a function $\varPhi: \R \times \R^{2} \to \R$ such that $F(t,w)=\nabla_w \varPhi (t,w)+\nabla_w Q(t,w)$, and
\begin{equation}\label{nabla_phi}
\nabla_w \varPhi (t,w)=(1-\gamma(t,w))\nabla \mathscr{H}_1 (w)+\gamma(t,w)\nabla \mathscr{H}_2 (w).
\end{equation}
Writing 
$$
\varPhi (t,w)=\varPhi (t,0)+\int_{0}^{1} \langle \nabla_w \varPhi (t,sw), w\rangle\,ds\,,
$$
we can assume without loss of generality that $\varPhi (t,0)=0$. Thus, using~\eqref{Euler_idty},~\eqref{H1_leq_H2} and the fact that for $i=1,2$, $\nabla \mathscr{H}_i$ is positively homogeneous of order one,  we have 
\begin{equation}\label{phi_between}
    \mathscr{H}_1(w)\leq \varPhi(t,w)\leq \mathscr{H}_2(w)\,.
\end{equation}

\medbreak

For every $\rho >1$, let $\eta_{\rho} :\R \to [0,1]$ be a $C^\infty$-function such that 
\begin{equation*}
    \eta_{\rho}(\xi)=\begin{cases}
        1 \quad \mbox{if } \xi \leq \rho\,,\\
        0 \quad \mbox{if } \xi \geq \rho^3\,,
    \end{cases}
\end{equation*}
and 
\begin{equation}\label{eta_prime}
    -\frac{1}{\xi \ln \xi}\leq \eta_\rho '(\xi)\leq 0\,,\quad \mbox{for every }\xi\geq \rho\,. 
\end{equation}
The existence of such a function is guaranteed by the fact that
\begin{equation*}
    \int_{\rho}^{\rho^{3}}\frac{d\xi}{\xi \ln \xi}>1\,.
    \end{equation*}
We now define the function $\varPhi_\rho :\R\times \R^2 \to \R$ as 
\begin{equation*}
    \varPhi_\rho (t,w)=\begin{cases}
        \varPhi(t,w) \quad &\mbox{ if } |w|\leq \rho \,,\\
        \eta_{\rho}(|w|)\varPhi(t,w)+(1-\eta_{\rho}(|w|)\frac{1}{2}[\mathscr{H}_1 (w)+\mathscr{H}_2 (w)]\quad &\mbox{ if } \rho \leq |w| \leq \rho^{3}\,,\\
        \frac{1}{2}[\mathscr{H}_1 (w)+\mathscr{H}_2 (w)]&\mbox{ if }  |w| \geq \rho^{3}\,,
    \end{cases}
\end{equation*}
and consider the following modified system 
\begin{equation}\label{sys_modified}
\begin{cases} 
\dot{x}= \nabla_y \HH(t,x,y)+\nabla_y P(t,x,y,w)\,,\\  
\dot{y}= -\nabla_x \HH(t,x,y)-\nabla_xP(t,x,y,w)\,, \\
J\dot{w} = F_\rho (t,w)+ \,\nabla_{w} P(t,x,y,w)\,,
\end{cases}
\end{equation}
where 
\begin{equation}\label{F_rho}
    F_\rho (t,w)=\nabla_w \varPhi_\rho (t,w) + \nabla_w Q(t,w)\,.
\end{equation} 
Notice that $\nabla_w \varPhi_\rho (t,w)$ can be decomposed as 
$$\nabla_w \varPhi_\rho (t,w)=(1-\gamma_\rho (t,w))\nabla H_1 (w)+\gamma_\rho (t,w)\nabla H_2 (w) + v_\rho (t,w)\,,$$
where $\gamma_\rho: \R\times \R^2 \to [0,1]$ is defined by
\begin{equation*}
    \gamma_\rho (t,w)=\begin{cases}
        \gamma (t,w) &\quad \mbox{ if }|w|\leq \rho\,,\\
        \eta_\rho (|w|)\gamma (t,w) +\frac{1}{2}(1-\eta_\rho (|w|)) &\quad \mbox{ if }\rho\leq |w|\leq \rho^3\,,\\
        \frac{1}{2} &\quad \mbox{ if }|w|\geq \rho^3\,,
    \end{cases}
\end{equation*}
and $v_\rho :\R \times \R^2 \to \R^2$ by
\begin{equation*}
    v_\rho (t,w)=\begin{cases}
        0 &\quad \mbox{ if }|w|\leq \rho\,,\\
        \eta_\rho ' (|w|)|w|^{-1} [\varPhi(t,w)-\frac{1}{2}(\mathscr{H}_1(w)+\mathscr{H}_2(w))]w
        &\quad \mbox{ if }\rho\leq |w|\leq \rho^3\,,\\
        0 &\quad \mbox{ if }|w|\geq \rho^3\,.
    \end{cases}
\end{equation*}
By~\eqref{homogenity}, for $i=1,2$ there exist $C_i>0$ such that $0< \SH_{i}(w) \leq C_i |w|^2$. So, if $\rho \leq |w| \leq \rho^ 3$, then we have
\begin{align}\label{eq:bdd:of:vrho} \nonumber
    |v_\rho (t,w)|&\leq \frac{1}{|w|\ln(|w|)}|\mathscr{H}_2 (w)-\mathscr{H}_1 (w)|\\ 
    &\leq \frac{C_1 +C_2}{\ln(|w|)}|w| \leq \frac{C_3}{2\ln \rho} |w|\,,
\end{align}
where $C_3 = C_1+C_2$.

\subsection{Proof of Theorem~\ref{Thm_nonres}}\label{Proof_1}
We first provide some a priori bounds so to ensure that the $T$-periodic solutions of the modified system are indeed solutions of the original one.
\begin{pro}\label{pro:for:priori}
There exists $\ov{\rho}>1$ such that, for any $\rho>\ov{\rho}$, every $T$-periodic solution of~\eqref{sys_modified} satisfies $\|w\|_\infty \leq \ov{\rho}$.
\end{pro}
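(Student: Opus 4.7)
The natural strategy is to argue by contradiction using a rescaling (blow-up) procedure. Suppose the statement fails; then there exist sequences $\rho_n\to\infty$ and $T$-periodic solutions $(x_n,y_n,w_n)$ of \eqref{sys_modified} with $\rho=\rho_n$ such that $r_n:=\|w_n\|_\infty\to\infty$. Set $\tilde w_n(t):=w_n(t)/r_n$, so that $\|\tilde w_n\|_\infty=1$. Using the decomposition of $F_\rho$ stated after \eqref{F_rho}, the $w$-equation becomes
\begin{equation*}
J\dot{\tilde w}_n=(1-\gamma_{\rho_n}(t,w_n))\nabla\mathscr{H}_1(\tilde w_n)+\gamma_{\rho_n}(t,w_n)\nabla\mathscr{H}_2(\tilde w_n)+\frac{v_{\rho_n}(t,w_n)}{r_n}+\frac{\nabla_w Q(t,w_n)+\nabla_w P(t,x_n,y_n,w_n)}{r_n}\,,
\end{equation*}
where I have used that $\nabla\mathscr{H}_i$ is positively $1$-homogeneous. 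The last two terms tend to $0$ uniformly, by \eqref{eq:of:bdd:of:q}, assumption $A2$, and the bound \eqref{eq:bdd:of:vrho} together with $|w_n|\leq r_n$ in the region where $v_{\rho_n}\neq 0$ (so the quotient is controlled by $C_3/(2\ln\rho_n)\to 0$).

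Next I would extract convergent subsequences. Since $\|\tilde w_n\|_\infty=1$ and the right-hand side is uniformly bounded (because $\nabla\mathscr{H}_i$ has linear growth), $\{\tilde w_n\}$ is uniformly bounded in $C^1$; Ascoli--Arzelà yields $\tilde w_n\to\tilde w$ uniformly, with $\|\tilde w\|_\infty=1$ (in particular $\tilde w\not\equiv 0$), and $\tilde w$ is $T$-periodic. Up to a further subsequence, $\gamma_{\rho_n}(\cdot,w_n(\cdot))$ converges weakly$^{*}$ in $L^\infty(0,T)$ to some measurable $\gamma_\infty:[0,T]\to[0,1]$. Passing to the limit in the rescaled equation (the nonlinear coefficients $\nabla\mathscr{H}_i(\tilde w_n)$ converge uniformly, so the weak$^{*}$/strong product converges in $\mathscr{D}'$), one obtains that $\tilde w$ is a non-trivial $T$-periodic solution of
\begin{equation*}
J\dot{\tilde w}=(1-\gamma_\infty(t))\nabla\mathscr{H}_1(\tilde w)+\gamma_\infty(t)\nabla\mathscr{H}_2(\tilde w)\,.
\end{equation*}

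The final step is to rule out such a solution under the strict nonresonance assumption \eqref{nonres_condition}. Writing $\tilde w(t)$ in polar-like coordinates adapted to the positively homogeneous Hamiltonians and using the standard comparison between the angular velocities associated with $\mathscr{H}_1$ and $\mathscr{H}_2$, the rotation number of $\tilde w$ on $[0,T]$ lies between $T/\tau_1$ and $T/\tau_2$. By \eqref{nonres_condition}, $N<T/\tau_1\leq T/\tau_2<N+1$, so this rotation number cannot be an integer, contradicting $T$-periodicity of the nonzero $\tilde w$. This forces the assumed sequence to terminate, yielding $\bar\rho$ as required. (Taking $\bar\rho$ large enough that the argument applies for all $\rho>\bar\rho$ is automatic from the contradiction set-up.)

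I expect the main obstacle to be the limit-passage in the $\gamma_{\rho_n}(t,w_n(t))\nabla\mathscr{H}_i(\tilde w_n(t))$ term: one needs that the $\nabla\mathscr{H}_i(\tilde w_n)$ factor converges \emph{strongly} in $L^1$ (which follows from uniform convergence of $\tilde w_n$ and continuity of $\nabla\mathscr{H}_i$), so that its product with the weakly$^{*}$ convergent $\gamma_{\rho_n}$ passes to the natural limit. A secondary delicate point is the comparison of rotation numbers: one needs to verify, in the spirit of the cited works \cite{Fon2004JDE, FonGar2011JDE, FonUll2024DIE}, that any non-trivial $T$-periodic solution of the convex-combination equation rotates around the origin, so that a well-defined rotation number exists and the sandwich between $T/\tau_1$ and $T/\tau_2$ applies.
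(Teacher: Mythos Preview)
Your argument is correct and follows essentially the same blow-up/compactness scheme as the paper: rescale $w_n$, show the remainder terms vanish uniformly, extract a uniform limit $\tilde w$ with $\|\tilde w\|_\infty=1$ solving the convex-combination equation, and derive a contradiction from nonresonance. The only difference is in the final step: the paper invokes \cite[Lemma~2.3]{FonGar2011JDE}, which yields the dichotomy that $\tilde w$ must in fact solve either $J\dot{\tilde w}=\nabla\mathscr{H}_1(\tilde w)$ or $J\dot{\tilde w}=\nabla\mathscr{H}_2(\tilde w)$, and then observes that neither admits a nontrivial $T$-periodic solution under \eqref{nonres_condition}; you instead argue directly via a rotation-number sandwich $T/\tau_1\leq \mathrm{rot}(\tilde w)\leq T/\tau_2$, which under the strict inequalities in \eqref{nonres_condition} cannot be an integer. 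Both routes are valid here; your direct argument is slightly more self-contained for the nonresonance case, while the dichotomy lemma is the sharper tool that becomes essential in the resonance proofs later in the paper. The two delicate points you flag (passage to the limit in the product and non-vanishing of $\tilde w$) are handled exactly as you indicate: strong convergence of $\nabla\mathscr{H}_i(\tilde w_n)$ against weak convergence of the coefficient, and uniqueness for the Carath\'eodory limit equation (the right-hand side is Lipschitz in $\tilde w$, and $0$ is an equilibrium).
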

\begin{proof}
    Assume by contradiction that there is a sequence $(\rho_n)_{n}$ in $]1,+\infty[$ and a sequence of $T$-periodic solutions $(x_n, y_n, w_n) $ of~\eqref{sys_modified}, with $\rho=\rho_n$, such that $\rho_n\to +\infty$ and $\|w_n\|_\infty > n$. Let $z_n=\frac{w_n}{\|w_n\|_\infty}$. Then, $z_n$ is $T$-periodic and satisfies 
    \begin{equation}\label{normalized_problem}
    \begin{cases}
        J\dot{z}_n
        =(1-\Gamma_n(t))\nabla\mathscr{H}_1(z_n)+
\Gamma_n(t)\nabla\mathscr{H}_2(z_n)+\kappa_n (t)\,,\\
        z_n (0)= z_n (T)\,,
        
        \end{cases}
    \end{equation}
    where
    $$\Gamma_n(t)=\gamma_{\rho_{n}}(t,\|w_n\|_\infty z_n(t))\,,$$
    and
  \begin{multline}\label{kappa}
      \kappa_n (t)=  \frac{1}{\|w_n\|_\infty}\big[v_{\rho_{n}}(t,\|w_n\|_\infty z_n(t))+
        \nabla_w Q(t,\|w_n\|_\infty z_n(t))+\\ \nabla_w P(t,x_n(t),y_n(t),\|w_n\|_\infty z_n(t))\big]\,.
        \end{multline}
Notice that $\Gamma_n(t)\in [0,1]$ for every $t \in [0,T]$. Moreover, using~\eqref{eq:bdd:of:vrho}, and by the boundedness of $\nabla_w Q$ and $\nabla_w P$, we see that $\kappa_n \to 0$ uniformly in $[0,T]$. The sequence  $(z_n)_{n}$ is thus bounded in $C^1 ([0,T],\R^2)$; therefore there exists a function $z\in C ([0,T],\R^2)$ such that, up to a subsequence, $z_n \to z$ uniformly and  weakly in $H^1$. Since the sequence $(\Gamma_n)_{n}$ is bounded, we can suppose that, up to a subsequence, it converges to some $\Gamma$ weakly in $L^2$ with $\Gamma(t)\in [0,1]$ for almost every $t\in [0,T]$. So, $\|z\|_\infty =1$, and passing to the weak limit in~\eqref{normalized_problem}, $z$ solves
\begin{equation*}
    \begin{cases}
        J\dot{z}
        =(1-\Gamma(t))\nabla\mathscr{H}_1(z)+
\Gamma(t)\nabla\mathscr{H}_2(z)\,,\\
        z (0)= z (T)\,.
        \end{cases}
    \end{equation*}
By~\cite[Lemma 2.3]{FonGar2011JDE}, either $z$ is a solution of $J\dot{z}=\nabla\mathscr{H}_1 (z)$, or it is a solution of $J\dot{z}=\nabla\mathscr{H}_2 (z)$. But this is a contradiction to the fact that $J\dot{z}=\nabla\mathscr{H}_1 (z)$ and $J\dot{z}=\nabla\mathscr{H}_2 (z)$ have only a trivial solution by~\eqref{nonres_condition}. Hence our supposition was wrong, and this completes the proof of the proposition.
\end{proof}

To conclude the proof of the theorem, we fix $\rho \geq \bar\rho$, and rewrite the Hamiltonian function of system~\eqref{sys_modified} as
\begin{equation}\label{eq:of:K:rho}
K_\rho(t,x,y,w)= \HH(t,x,y) + \frac{1}{2}\big[ \mathscr{H}_1(w) + \mathscr{H}_2(w)  \big] + \widetilde{P}_{\rho}(t,x,y,w) \,,
\end{equation}
with 
$$
 \widetilde{P}_{\rho}(t,x,y,w)=P(t,x,y,w) + Q(t,w) + \Phi_{\rho}(t,w) -\frac{1}{2}\big[ \mathscr{H}_1(w) + \mathscr{H}_2(w)  \big]\,. 
$$
Proposition~\ref{pro:for:priori} provides an a priori bound in $C([0,T], \R^2)$ for the $w$ component of the solutions of system~\eqref{sys_modified}. Using the second system in~\eqref{sys_modified} we see that the a priori bound can be extended to the derivative of $w$. This implies that we have an a priori bound in $C^1([0,T], \R^2)$. Hence, by the Ascoli--Arzel\`a Theorem that $w$ belongs to a compact set $W \subseteq C([0,T], \R^2)$. Now since $\varPhi_{\rho}(t,w)- \frac{1}{2}\big[ \mathscr{H}_1(w) + \mathscr{H}_2(w)  \big] = 0$ for $|w| \geq \rho^3$, we have that $\widetilde{P}_{\rho}(t,x,y,w)$ has a bounded gradient with respect to $(x,y,w)$. Thus by using~\cite[Theorem 1.1]{FonUll2024DIE} system~\eqref{sys_modified} has at least $M+1$ geometrically distinct $T$-periodic solutions such that $y(0) \in\, \mathring{\mathcal{D}}\,$. By Proposition~\ref{pro:for:priori}, these solutions are indeed solutions of the original system~\eqref{sys:original}. \qed

\subsection{Proof of Theorem~\ref{Thm_res}}\label{Proof_2}
Similar to the previous section, we first state and prove the following proposition which is crucial for the proof of Theorem~\ref{Thm_res}.

\begin{pro}\label{pro:for:priori:res}
There exists $\ov{\rho}>1$ such that, for any $\rho>\ov{\rho}$, every $T$-periodic solution $(x(t),y(t),w(t))$ of~\eqref{sys_modified} satisfies $\|w\|_\infty \leq \ov{\rho}$.
\end{pro}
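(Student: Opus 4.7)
The plan is to extend the contradiction scheme of Proposition~\ref{pro:for:priori} to the resonant regime, where the candidate limit $z$ is no longer forced to be trivial, by exploiting $A5$ and $A6$ to exclude the two possible non-trivial limit profiles. Assume by contradiction the existence of $\rho_n\to+\infty$ and $T$-periodic solutions $(x_n,y_n,w_n)$ of~\eqref{sys_modified} with $\rho=\rho_n$ and $\|w_n\|_\infty>n$. Setting $z_n=w_n/\|w_n\|_\infty$, the argument producing~\eqref{normalized_problem}--\eqref{kappa} applies verbatim and yields, along a subsequence, a uniform limit $z\in C([0,T],\R^2)$ with $\|z\|_\infty=1$ satisfying
\begin{equation*}
J\dot z=(1-\Gamma(t))\nabla\SH_1(z)+\Gamma(t)\nabla\SH_2(z)\,,\qquad z(0)=z(T)\,,
\end{equation*}
for some measurable $\Gamma\colon[0,T]\to[0,1]$. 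By \cite[Lemma~2.3]{FonGar2011JDE} and the hypothesis $\SH_1<\SH_2$ on $\R^2\setminus\{0\}$, $z$ must solve either $J\dot z=\nabla\SH_1(z)$ or $J\dot z=\nabla\SH_2(z)$; in view of~\eqref{res_condition} and~\eqref{half}, this gives $z(t)=R\,\varphi(t+s_*)$ or $z(t)=\widetilde R\,\psi(t+s_*)$ for some $R,\widetilde R>0$ and $s_*\in[0,T]$. As $\SH_1(\varphi(t))=\SH_2(\psi(t))=1/2$ forces $\varphi$ and $\psi$ to be non-vanishing, $|w_n(t)|\to+\infty$ uniformly in $t$ in either case.

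To rule out the first alternative, test the third equation of~\eqref{sys_modified} against $\varphi(t+s_*)$. Integration by parts, $T$-periodicity and $J\dot\varphi=\nabla\SH_1(\varphi)$ produce
\begin{equation*}
\int_0^T\!\!\big[\langle F_{\rho_n}(t,w_n),\varphi(t+s_*)\rangle-\langle w_n,\nabla\SH_1(\varphi(t+s_*))\rangle\big]\,dt=-\!\int_0^T\!\!\langle\nabla_w P(t,x_n,y_n,w_n),\varphi(t+s_*)\rangle\,dt\,,
\end{equation*}
whose right-hand side is bounded in modulus by $\ov m\int_0^T|\varphi(t)|\,dt$ by $A2$. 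Setting $\lambda_n=R\|w_n\|_\infty$, the uniform expansion $w_n=\lambda_n\varphi(t+s_*)+o(\lambda_n)$ together with the Euler identity~\eqref{Euler_idty} and $\SH_1(\varphi(t))=1/2$ yield $\int_0^T\langle w_n,\nabla\SH_1(\varphi(t+s_*))\rangle\,dt=\int_0^T 2\lambda_n\SH_1(\varphi(t))\,dt+o(\lambda_n)$. Combining this with the decomposition~\eqref{F_rho}, the bound~\eqref{eq:bdd:of:vrho} on $v_{\rho_n}$ and the inequality $\SH_1\leq\SH_2$ (which renders the cut-off correction non-negative on the tail $\{|w_n|>\rho_n\}$), a Fatou-type passage gives
\begin{equation*}
\liminf_n\!\int_0^T\!\!\big[\langle F_{\rho_n}(t,w_n),\varphi(t+s_*)\rangle-2\lambda_n\SH_1(\varphi(t))\big]\,dt\geq\int_0^T\!\!\liminf_{(\lambda,s)\to(+\infty,s_*)}\!\big[\langle F(t,\lambda\varphi(t+s)),\varphi(t+s)\rangle-2\lambda\SH_1(\varphi(t))\big]\,dt\,,
\end{equation*}
which by $A5$ strictly exceeds $\ov m\int_0^T|\varphi(t)|\,dt$, contradicting the $A2$-bound above.

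The second alternative is excluded by the symmetric argument, testing against $\psi(t+s_*)$ and invoking $A6$; its structure $2\lambda\SH_2(\psi)-\langle F,\psi\rangle$ flips the relevant signs so that $\SH_2\geq\SH_1$ still makes the cut-off correction point in the favourable direction. The main obstacle is the Fatou-type inequality: one must simultaneously match the $\liminf_{(\lambda,s)\to(+\infty,\theta)}$ appearing in~\eqref{LL_1a}--\eqref{LL_1b} with the asymptotic behaviour of the explicit sequence $(\lambda_n,s_*)$ extracted from the $w_n$, and absorb the discrepancy between $F_{\rho_n}$ and $F$ on the region $\{|w_n|>\rho_n\}$ where the cut-off is active. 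The logarithmic choice~\eqref{eta_prime} of $\eta_\rho$ is tuned precisely so that the perturbation $v_{\rho_n}$ contributes only $o(\|w_n\|_\infty)$ to these inner products, while the averaging part of the modification yields a non-negative remainder thanks to $\SH_1\leq\SH_2$.
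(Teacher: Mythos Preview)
Your contradiction setup and the passage to the limit profile $z$ are fine and match the paper. The gap is in the step where you test the $w$-equation against the \emph{fixed} function $\varphi(t+s_*)$. The identity you obtain,
\[
\int_0^T\!\big[\langle F_{\rho_n}(t,w_n),\varphi(t+s_*)\rangle-\langle w_n,\nabla\SH_1(\varphi(t+s_*))\rangle\big]\,dt
=-\int_0^T\!\langle\nabla_w P,\varphi(t+s_*)\rangle\,dt\,,
\]
is correct, but when you replace $\int_0^T\langle w_n,\nabla\SH_1(\varphi(t+s_*))\rangle\,dt$ by $\int_0^T 2\lambda_n\SH_1(\varphi(t))\,dt$ the error is only $o(\lambda_n)$, not $o(1)$. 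Consequently the quantity $\int_0^T[\langle F_{\rho_n}(t,w_n),\varphi(t+s_*)\rangle-2\lambda_n\SH_1(\varphi(t))]\,dt$ is only known to equal $O(1)+o(\lambda_n)$, and this $o(\lambda_n)$ term may tend to $\pm\infty$. It therefore destroys both the upper bound $\leq \ov m\int|\varphi|$ you need and, pointwise, the uniform lower bound required for Fatou. The Landesman--Lazer gap in $A5$ is an $O(1)$ quantity, so an uncontrolled $o(\lambda_n)$ remainder makes the comparison vacuous. A related issue is that your integrand pairs $F_{\rho_n}(t,w_n)$ with $\varphi(t+s_*)$ while condition~\eqref{LL_1a} pairs $F(t,\lambda\varphi(t+s))$ with $\varphi(t+s)$ for the \emph{same} $s$; since $w_n(t)$ is not exactly proportional to $\varphi(t+s_*)$, the pointwise $\liminf$ you write down does not follow.

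The paper avoids this by writing $w_n(t)=r_n(t)\,\varphi(t+\theta_n(t))$ in generalized polar coordinates and integrating $\dot\theta_n$. This produces an \emph{exact} identity with no $o(\lambda_n)$ leakage: the leading terms cancel because the test direction $\varphi(t+\theta_n(t))$ is precisely aligned with $w_n(t)$, and Euler's identity~\eqref{Euler_idty} together with~\eqref{half} gives the subtraction of $r_n(t)$ on the nose. The integrand is then $\langle F_{\rho_n}(t,r_n\varphi(t+\theta_n)),\varphi(t+\theta_n)\rangle-r_n$, which is genuinely bounded below (the three-case analysis), so Fatou applies; and since $(r_n(t),\theta_n(t))\to(+\infty,\theta_0)$, the pointwise $\liminf$ matches the $\liminf_{(\lambda,s)\to(+\infty,\theta_0)}$ of $A5$ directly. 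The presence of the parameter $s$ in~\eqref{LL_1a} is precisely what accommodates the \emph{varying} phase $\theta_n(t)$; your fixed-$s_*$ test never uses this freedom, which is a symptom of the mismatch.
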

\begin{proof}
Assume by contradiction that there is a sequence $(\rho_n)_{n}$ in $]1,+\infty[$ and a sequence of $T$-periodic solutions $(x_n, y_n, w_n) $ of~\eqref{sys_modified}, with $\rho=\rho_n$, such that $\rho_n\to +\infty$ and $\|w_n\|_\infty > n$. Let $z_n=\frac{w_n}{\|w_n\|_\infty}$. Then by similar arguments in the proof of Proposition~\ref{pro:for:priori}, we can find a function $z \in C([0,T], \R^2)$ such that $\|z\|_{\infty}=1$, and  it solves
\begin{equation*}
    \begin{cases}
        J\dot{z}
        =(1-\Gamma(t))\nabla\mathscr{H}_1(z)+
\Gamma(t)\nabla\mathscr{H}_2(z)\,,\\
        z (0)= z (T)\,.
        \end{cases}
    \end{equation*}
By~\cite[Lemma 2.3]{FonGar2011JDE}, either $z$ is a solution of $J\dot{z}=\nabla\mathscr{H}_1 (z)$, or it is a solution of $J\dot{z}=\nabla\mathscr{H}_2 (z)$. Let us assume that $J\dot{z}=\nabla\mathscr{H}_1 (z)$, and so $\Gamma(t) \equiv 0$. The other case can be treated similarly. 

\medbreak

For suitable $R_0 >0$, $\theta_0 \in [0,\tau_1 [$\,, it will be $z(t)=R_0 \varphi(t+\theta_0)$.  By $A2$ and~\eqref{eq:of:lin:growth}, there exists $C_4>0$ such that
$$
|\dot{w}_n(t)| \leq C_4 (1+ |w_n(t)|)\,.
$$
So by  Gronwall Lemma we deduce that
$$
|w_n(t_0)| \leq |w_n(t_1)| e^{C_4(t_1-t_0)} \,,\quad \hbox{ for every } t_0 \leq t_1\,,
$$
and as a consequence we can say that if $\|w_n\|_{\infty} \to +\infty$, then $|w_n(t)| \to + \infty$ uniformly in $t \in [0,T]$. This shows that $w_n(t) \neq 0$ for every $t\in [0,T]$. Now writing, in generalized polar coordinates, $w_n (t)= r_n (t) \varphi(t+\theta_n (t))$, with $\theta_n (0) \in [0,\tau_1 [$ for every $n$, we see that $r_n(t) \to \infty$ uniformly in $t \in [0,T]$. The second system in~\eqref{sys_modified} with $\rho=\rho_n$ gives
\begin{align*}
    \dot{\theta}_n (t) &= \frac{\langle J\dot{w}_n (t)\,,w_n (t)\rangle}{r_n^2 (t)}-1\\
    &=  \frac{\Big\langle F_{\rho_n}\big(t,r_n (t) 
     \varphi(t+\theta_n (t))\big)\,,\varphi(t+\theta_n (t))\Big\rangle-r_n (t)}{r_n (t)}\\
    &\quad +\frac{\Big\langle\nabla_w P\big(t,x_n(t), y_n(t), r_n(t)\varphi(t+\theta_n (t))\big)\,,\varphi(t+\theta_n (t))\Big\rangle}{r_n (t)}\,.
\end{align*}
Since $z$ solves $J\dot{z}=\nabla\mathscr{H}_1 (z)$, by \eqref{res_condition} it performs $N$ rotations around the origin when $t$ varies from $0$ to $T$. Moreover, since $z_n \to z$ uniformly, for $n$ sufficiently large every $z_n$ performs $N$ rotations around the origin, and also every $w_n$, since $w_n = \|w_n\|_{\infty}z_n$. As a consequence, for such $n$ it is $\theta_n (0)=\theta_n (T)$, thus integrating in the above computation , we get
\begin{align*}
    0&= \int_0^T \Bigg[ \frac{\langle F_{\rho_n}\big(t,r_n (t) \varphi(t+\theta_n (t))\big)\,,\varphi(t+\theta_n (t))\rangle-r_n(t)}{r_n (t)}\\
     &\quad+\frac{\Big\langle\nabla_w P\big(t,x_n(t), y_n(t), r_n(t)\varphi(t+\theta_n (t))\big)\,,\varphi(t+\theta_n (t))\Big\rangle}{r_n (t)}\Bigg]\,dt\,.
     \end{align*}
So, for $n$ large, 
\begin{multline}\label{eq:to:use:Fatou}
   0= \int_0^T \Bigg[ \frac{\langle F_{\rho_{n}}\big(t,r_n (t) \varphi(t+\theta_n (t))\big)\,,\varphi(t+\theta_n (t))\rangle-r_n(t)}{r^v _n (t)}\\
     +\frac{\Big\langle\nabla_w P\big(t,x_n(t), y_n(t), r_n(t)\varphi(t+\theta_n (t))\big)\,,\varphi(t+\theta_n (t))\Big\rangle}{r^v _n (t)}\Bigg]\,dt\,,
\end{multline}
where $r^v _n (t)=\frac{r_n (t)}{\|w_n \|_\infty}$. 
We now prove that the expressions in the above computation are bounded below in order to apply Fatou's Lemma. Clearly, the second term in \eqref{eq:to:use:Fatou} is bounded below by $A2$. For the first one, we discuss the following cases.
\medbreak

\medbreak

\ni{\it Case 1.} If $|r_n (t) \varphi(t+\theta_n (t))| \leq \rho_n$, then $F_{\rho_n}=F$ and so by~\eqref{Euler_idty},~\eqref{covex_comb.},~\eqref{eq:of:bdd:of:q},~\eqref{H1_leq_H2}, and ~\eqref{half}, we have
$$
\big\langle F\big(t,r_n (t) \varphi(t+\theta_n (t))\big)\,,\varphi(t+\theta_n (t))\big\rangle-r_n(t) \geq  - \widetilde{C} \| \varphi\|_{\infty} \,.
$$

\medbreak

\ni{\it Case 2.} If $ |r_n (t) \varphi(t+\theta_n (t))| \geq \rho_n^3$, then 
\begin{multline*}
F_{\rho_n}(t, r_n (t) \varphi(t+\theta_n (t)))= \frac{r_n (t)}{2}\big[\nabla \mathscr{H}_1( \varphi(t+\theta_n (t)))+ \nabla\mathscr{H}_2( \varphi(t+\theta_n (t)))\big]\\
+ \nabla_w Q(t, r_n (t) \varphi(t+\theta_n (t)))\,.   
\end{multline*}
Using the fact that $\SH_1(w) < \SH_2(w)$, there exists $\varepsilon>0$ such that
$$
\SH_2(\varphi(s))- \SH_1(\varphi(s)) \geq \varepsilon\,, \quad \hbox{ for every } s \in [0,T]\,,
$$
and thus by~\eqref{Euler_idty},~\eqref{eq:of:bdd:of:q} and~\eqref{half}, we have
\begin{align}\label{eq:for:using:Fatou} \nonumber
    &\big\langle F_{\rho_n}\big(t,r_n (t)  \varphi(t+\theta_n (t))\big)\,,\varphi(t+\theta_n (t))\big\rangle-r_n(t)  \\ \nonumber
    & \geq \frac{r_n(t)}{2} \bigg( 1 + (1 + 2\varepsilon) \bigg) - r_n( t)+ \big\langle \nabla_w Q(t, r_n (t) \varphi(t+\theta_n (t))), \varphi(t+\theta_n (t)) \big\rangle\\
    & \geq \varepsilon r_n(t) - \widetilde{C}\| \varphi\|_{\infty} \to +\infty\,,
\end{align}
when $ n \to +\infty$, since $r_n(t) \to + \infty$.

\medbreak

\ni{\it Case 3.} Finally, if $\rho_n \leq |r_n (t) \varphi(t+\theta_n (t))|\leq \rho_n^3$, we just interpolate the previous two cases so to get the desired bound.

\medbreak

Thus we can use Fatou's Lemma to~\eqref{eq:to:use:Fatou} and get
\begin{multline*}
   0\geq  \int_0^T \liminf_n \Bigg[ \frac{\langle F_{\rho_{n}}\big(t,r_n (t) \varphi(t+\theta_n (t))\big)\,,\varphi(t+\theta_n (t))\rangle-r_n(t)}{r^v _n (t)}\\
     +\frac{\Big\langle\nabla_w P\big(t,x_n(t), y_n(t), r_n(t)\varphi(t+\theta_n (t))\big)\,,\varphi(t+\theta_n (t))\Big\rangle}{r^v _n (t)}\Bigg]\,dt\,.
\end{multline*}
Now using standard properties of the inferior limit, taking into account that, since $z_n \to z$ uniformly, $r^v _n \to R_0$ uniformly and by $A2$, we have
\begin{multline}\label{eq:for:Frho:and:F}
   \ov{m}\int_0 ^T |\varphi(t)|\,dt \\  \geq  \int_0^T \liminf_n  \big[\langle F_{\rho_{n}}\big(t,r_n (t) \varphi(t+\theta_n (t))\big)\,,\varphi(t+\theta_n (t))\rangle-r_n(t)\big]\,dt\,.
\end{multline}
We fix $t \in [0,T]$ and discuss the possible three cases to verify that
\begin{multline}\label{eq:of:Frho:and:F}
 \big\langle F_{\rho_{n}}\big(t,r_n (t) \varphi(t+\theta_n (t))\big)\,,\varphi(t+\theta_n (t))\big\rangle \\
 \geq \big\langle F\big(t,r_n (t) \varphi(t+\theta_n (t))\big)\,,\varphi(t+\theta_n (t))\big\rangle\,.
\end{multline}

\medbreak

\ni{\it Case 1.} If $|r_n (t) \varphi(t+\theta_n (t))| \leq \rho_n$, then $F_{\rho_n}=F$.

\medbreak

\ni{\it Case 2.} If $|r_n (t) \varphi(t+\theta_n (t))| \geq \rho_n^3$, then by \eqref{eq:for:using:Fatou} for $n$ large enough, \eqref{eq:of:Frho:and:F} follows.

\medbreak

\ni{\it Case 3.} Finally, if $\rho_n \leq |r_n (t) \varphi(t+\theta_n (t))|\leq \rho_n^3$, we just interpolate the previous two cases so to get \eqref{eq:of:Frho:and:F}, for $n$ large enough.
\medbreak
Thus \eqref{eq:for:Frho:and:F} implies that
$$
\ov{m}\int_0 ^T |\varphi(t)|\,dt \geq \int_0^T  \liminf_n  \big[\langle F\big(t,r_n (t) \varphi(t+\theta_n (t))\big)\,,\varphi(t+\theta_n (t))\rangle-r_n(t)\big]\,dt\,.
$$

\medbreak

By using the fact that $z_n \to z$ uniformly, we can assume without loss of generality that, up to a subsequence, $\theta_n (t) \to \theta_0 \in [0,T]$  uniformly. Recalling~\eqref{half}, for every $t\in [0,T]$ we are computing the inferior limit which appears in~\eqref{LL_1a} along the particular
subsequence $(r_n (t)\,, \theta_n (t))$ for which, $\theta_n (t) \to \theta_0$ and $r_n (t)=||w_n ||_\infty r^v _n (t)\to +\infty$. We obtain
\begin{equation*}
\ov{m}\int_{0}^{T}|\varphi(t)| dt\geq \int_{0}^{T}\liminf\limits_{(\lambda,\,s)\to(+\infty,\,\theta_0)}\big[\big\langle F\big(t,\lambda \varphi(t+s)\big),\,\varphi(t+s) \big\rangle -\lambda \big)\big]\,dt \,,
\end{equation*}
which contradicts~\eqref{LL_1a}, thus completing the proof of the proposition.
\end{proof}
To complete the proof of Theorem~\ref{Thm_res}, we can write exactly the same lines as for Theorem~\ref{Thm_nonres} after the proof of Proposition~\ref{pro:for:priori}. \qed

\subsection{Proof of Theorems~\ref{Thm_simple_res:below} and~\ref{Thm_simple_res:above}}
We first prove Theorem~\ref{Thm_simple_res:below}, and for this we assume without loss of generality that $\SH_1(w) < \SH_2(w)$ for all $w \in \R^2 \setminus \{0\}$. Because if this is not true, then we can define a function $\widetilde{\SH}_2$ by
$$
\widetilde{\SH}_2(w)= \SH_2(w)+ \epsilon |w|^2
$$
for some $\epsilon>0$ small enough so that  
$$
\SH_1(w) < \widetilde{\SH}_2(w).
$$
Denote by $\widetilde{\tau}_\psi$, the minimal period corresponding to system
$$
J \dot{w}= \nabla \widetilde{\SH}_2(w)\,.
$$
We see that for $\epsilon$ small enough, it still satisfies
$$
\frac{T}{N+1}< \widetilde{\tau}_\psi <  \tau_2  \leq \tau_1 = \frac{T}{N}\,.
$$
The proof of Theorem ~\ref{Thm_simple_res:below} now follows similarly as the proof of Theorem~\ref{Thm_res}.

\medbreak

To prove Theorem~\ref{Thm_simple_res:above}, we can define a function $\widetilde{\SH}_1$ by
$$
\widetilde{\SH}_1(w)= \SH_1(w)- \epsilon |w|^2
$$
for some $\epsilon>0$ small enough so that  
$$
\SH_2(w) > \widetilde{\SH}_1(w).
$$
Denote by $\widetilde{\tau}_\varphi$, the minimal period corresponding to system
$$
J \dot{w}= \nabla \widetilde{\SH}_1(w)\,.
$$
It is clear that  for $\epsilon$ small enough, it satisfies
$$
\frac{T}{N+1}=  \tau_2  \leq \tau_1 < \widetilde{\tau}_\varphi < \frac{T}{N}\,,
$$
and the proof follows similarly as the proof of Theorem~\ref{Thm_res}.

\section{Some variants of the main results}\label{variants}
We first discuss some variants of our results by changing assumption $A4$. Let us denote respectively the four quadrants of the plane $\R^2$ by $E_1, E_2, E_3, $ and $E_4$. Instead of $A4$, we assume the following assumption.

\noindent $A4'.$ There are five functions $\gamma_j: \R\times E_j \to [0,1]$ and $Q: \R\times \R^2 \to \R$ such that
\begin{equation}\label{covex_comb.variants}
F(t,w)=(1-\gamma_j(t,w))\nabla \mathscr{H}_1 (w)+\gamma_j(t,w)\nabla \mathscr{H}_2 (w) + \nabla_w Q(t,w)\,,
\end{equation}
for all $w \in E_j$, where all the functions are continuous, $T$-periodic in $t$, and $\nabla_w Q(t,w)$ is uniformly bounded, i.e., there exists $\widetilde{C}>0$ such that
\begin{equation}\label{eq:of:bdd:of:q:variants}
|\nabla_w Q(t,w)| \leq  \widetilde{C}\,, \quad \hbox{ for every }\, (t,w) \in [0,T] \times \R^2\,.
\end{equation}

\medbreak

We now state the variant of our main results.

\begin{Thm}\label{Thm:variant:A4:nonres}
If in the statement of Theorem~\ref{Thm_nonres} we replace assumption $A4$ by $A4'$, then the same conclusion holds.
\end{Thm}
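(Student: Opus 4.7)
The plan is to follow the proof of Theorem~\ref{Thm_nonres} almost verbatim, taking care of the fact that under $A4'$ the convex-combination decomposition of $F$ is only piecewise (one expression on each of the four quadrants), and in particular the weight function may be discontinuous across the coordinate axes.

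First I would construct the primitive $\varPhi$. Since $F(t,\cdot)=\nabla_w K(t,\cdot)$ globally and $\nabla_w Q$ is a gradient, the difference $F-\nabla_w Q$ is a global gradient; we may set $\varPhi=K-Q$ (adjusted by an additive function of $t$ so that $\varPhi(t,0)=0$) and obtain a $C^1$ function on $\R\times\R^2$. The key point is that although $\nabla_w\varPhi$ now has the decomposition $(1-\gamma_j)\nabla\SH_1+\gamma_j\nabla\SH_2$ only on each quadrant $E_j$ (with possibly mismatching boundary values of the $\gamma_j$'s across the axes), the gradient itself is continuous. Then I would verify \eqref{phi_between} by exploiting the fact that each quadrant is a cone: for $w$ in the interior of $E_j$, the whole segment $\{sw:0\le s\le 1\}$ lies in $E_j$, so writing $\varPhi(t,w)=\int_0^1\langle\nabla_w\varPhi(t,sw),w\rangle\,ds$ and using Euler's identity together with the positive $1$-homogeneity of $\nabla\SH_i$, one still obtains $\SH_1(w)\le\varPhi(t,w)\le\SH_2(w)$ on the interior of each quadrant, and then everywhere by continuity.

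With $\varPhi$ in hand, I would then define $\varPhi_\rho$ and the modified system \eqref{sys_modified} exactly as in Section~\ref{Proofs}. The weight $\gamma_\rho:\R\times\R^2\to[0,1]$ obtained by gluing the $\gamma_j$'s (and their analogues in the annulus $\rho\le|w|\le\rho^3$ via $\eta_\rho$) is no longer continuous in $w$, but it is Borel measurable; the error $v_\rho$ still satisfies the bound \eqref{eq:bdd:of:vrho}, and $F_\rho=\nabla_w\varPhi_\rho+\nabla_w Q$ is continuous because $\varPhi_\rho+Q$ is $C^1$, so classical ODE theory still applies to \eqref{sys_modified}.

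Next I would repeat the a priori bound argument of Proposition~\ref{pro:for:priori}. The only novelty is that the sequence $\Gamma_n(t):=\gamma_\rho(t,\|w_n\|_\infty z_n(t))$ is merely measurable; however it takes values in $[0,1]$, so it is bounded in $L^\infty$, hence weakly-$\ast$ precompact. Extracting a subsequence converging weakly in $L^2$ to some $\Gamma$ with $\Gamma(t)\in[0,1]$ a.e., and using that $z_n\to z$ uniformly together with the continuity of $\nabla\SH_i$ on compact sets to pass to the limit in the nonlinear terms, I obtain that $z\in H^1$ satisfies
\begin{equation*}
J\dot z=(1-\Gamma(t))\nabla\SH_1(z)+\Gamma(t)\nabla\SH_2(z)\,,\qquad z(0)=z(T)\,,
\end{equation*}
with $\|z\|_\infty=1$. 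Lemma~2.3 of \cite{FonGar2011JDE} (which requires only measurability and a.e.\ range in $[0,1]$ of the weight) then forces $z$ to solve either $J\dot z=\nabla\SH_1(z)$ or $J\dot z=\nabla\SH_2(z)$, both of which are ruled out by the nonresonance hypothesis \eqref{nonres_condition}.

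Once the a priori bound is established, the remaining lines of the proof of Theorem~\ref{Thm_nonres}---rewriting the Hamiltonian as in \eqref{eq:of:K:rho}, checking that $\widetilde P_\rho$ has bounded gradient outside a large ball, and invoking \cite[Theorem~1.1]{FonUll2024DIE}---carry over unchanged, yielding $M+1$ geometrically distinct $T$-periodic solutions with $y(0)\in\mathring{\mathcal D}$.

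The main obstacle I expect is the verification that \eqref{phi_between} and the passage to the weak limit in the normalized equation survive the loss of continuity of the weight. Both are handled by exploiting the cone structure of the quadrants (for \eqref{phi_between}) and the fact that the dependence in $(x,y,w)$ enters the limit equation only through $\nabla\SH_i(z_n)$, which converges uniformly; no pointwise convergence of $\Gamma_n$ is needed, only weak $L^2$ convergence to some function with values in $[0,1]$.
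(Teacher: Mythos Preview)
Your proposal is correct and follows essentially the same strategy as the paper's proof: construct a global $\varPhi$, build the modified system $\eqref{sys_modified}$ exactly as before, redo the a~priori bound of Proposition~\ref{pro:for:priori} with a merely measurable weight, and finish via \cite[Theorem~1.1]{FonUll2024DIE}. Your justification of \eqref{phi_between} through the cone structure of the quadrants is in fact more explicit than what the paper writes.

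There is one point where the paper argues differently and which you should tighten. You define a single glued weight $\gamma_\rho$ on all of $\R^2$ and claim that the normalized equation \eqref{normalized_problem} holds with $\Gamma_n(t)=\gamma_\rho(t,\|w_n\|_\infty z_n(t))$; for this you implicitly need the quadrant-wise identity $\nabla_w\varPhi=(1-\gamma_j)\nabla\SH_1+\gamma_j\nabla\SH_2$ to remain valid at the glued value wherever $z_n(t)$ sits on a coordinate axis. The paper sidesteps this entirely: it defines $\Gamma_n(t)$ only when $z_n(t)\in\mathring E_j$ and then observes that, for $n$ large, $\langle \dot z_n(t),-Jz_n(t)\rangle>0$, so $z_n$ crosses the axes transversally and the set of times where $\Gamma_n$ is undefined has Lebesgue measure zero. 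Your route can be made to work too (since each $\gamma_j$ is continuous on the closed quadrant $E_j$, any choice of boundary value coming from an adjacent quadrant still satisfies the decomposition), but you should state this explicitly; otherwise the passage from $J\dot z_n=F_{\rho_n}(t,w_n)/\|w_n\|_\infty+\dots$ to \eqref{normalized_problem} with your $\Gamma_n$ is not fully justified at axis times.
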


\begin{proof}
    Similar to Section~\ref{Proofs}, we can find a function $\varPhi: \R \times \R^2 \to \R$ such that $F(t,w)= \nabla_{w} \varPhi(t,w)+ \nabla_{w} Q(t,w)$, and
\begin{equation}\label{nabla_phi:variant}
\nabla_w \varPhi (t,w)=(1-\gamma_j(t,w))\nabla \mathscr{H}_1 (w)+\gamma_j(t,w)\nabla \mathscr{H}_2 (w)\,, \quad \hbox{ for every } \, w \in E_j\,.
\end{equation}
Now we modify system~\eqref{sys:original} by the similar way we modified in Section~\ref{Proofs} with the only change in defining $\gamma_{\rho}$, i.e., we define $\gamma_{j_\rho}: \R\times E_j \to [0,1]$ instead of $\gamma_{\rho}: \R\times \R^2 \to [0,1]$ as
$$
\gamma_{j_\rho}(t,w)=\begin{cases}
        \gamma_j (t,w) &\quad \mbox{ if }|w|\leq \rho\,,\\
        \eta_\rho (|w|)\gamma_j (t,w) +\frac{1}{2}(1-\eta_\rho (|w|)) &\quad \mbox{ if }\rho\leq |w|\leq \rho^3\,,\\
        \frac{1}{2} &\quad \mbox{ if }|w|\geq \rho^3\,.
    \end{cases}
$$
We can now prove the a priori bound for the modified system \eqref{sys_modified} in this case following the same lines as in the proof of Proposition \ref{pro:for:priori} with the only difference in the $\Gamma_{n}(t)$, which we can define in this case as follows:
$$
\Gamma_n(t)=\gamma_{j_{\rho_{n}}}(t,\|w_n\|_\infty z_n(t))\,, \quad \hbox{ when }  z_n \in \mathring {E}_j\,.
$$
We notice that $\Gamma_n(t)$ is not defined if $z_n(t)$ belongs to an axis. Anyway, the set of those $t's$ has zero measure, since if $n$ is large enough, one has
$$
\langle \dot{z}_n(t), -J z_n(t) \rangle >0\,, \quad \hbox{ for every } t \in [0,T]\,,
$$
hence $z_n(t) $ crosses the axes transversally. 
\medbreak
After the a priori bound, we rewrite the Hamiltonian function of system \eqref{sys_modified} as in \eqref{eq:of:K:rho} and by a similar way, we obtain $M+1$ geometrically distinct $T$-periodic solutions $(x(t),y(t),w(t))$ of system \eqref{sys:original} such that $y(0) \in\, \mathring{\mathcal{D}}\,$. The proof of the theorem is thus completed.    
\end{proof}

We now state the case of double resonance. We skip the proof for briefness. 

\begin{Thm}\label{Thm:variant:A4:res}
If in the statement of Theorem~\ref{Thm_res} we replace assumption $A4$ by $A4'$, then the same conclusion holds.
\end{Thm}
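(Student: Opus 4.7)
The plan is to combine the two immediately preceding proofs: use the quadrant-wise modification scheme from the proof of Theorem~\ref{Thm:variant:A4:nonres} to cope with $A4'$, while keeping the Landesman--Lazer/Fatou argument from Proposition~\ref{pro:for:priori:res} to handle the double resonance. Concretely, I would first construct a potential $\varPhi:\R\times\R^2\to\R$ piecewise on the closed quadrants so that $F(t,w)=\nabla_w\varPhi(t,w)+\nabla_w Q(t,w)$ with the decomposition~\eqref{nabla_phi:variant} holding on each $E_j$, and then introduce the cut-off $\eta_\rho$ and the truncated potential $\varPhi_\rho$ exactly as in Section~\ref{Proofs}, replacing $\gamma_\rho$ by the quadrant-dependent $\gamma_{j_\rho}$ of Theorem~\ref{Thm:variant:A4:nonres}. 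This yields a modified system of the form~\eqref{sys_modified}.

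The heart of the proof is the analogue of Proposition~\ref{pro:for:priori:res}. Arguing by contradiction, take a blow-up sequence $(x_n,y_n,w_n)$ of $T$-periodic solutions of the modified system with $\rho=\rho_n\to+\infty$ and $\|w_n\|_\infty>n$, and normalize $z_n=w_n/\|w_n\|_\infty$. The coefficient
\begin{equation*}
\Gamma_n(t)=\gamma_{j_{\rho_n}}\bigl(t,\|w_n\|_\infty z_n(t)\bigr), \qquad z_n(t)\in\mathring E_j,
\end{equation*}
is undefined on $\{t:z_n(t)\text{ lies on an axis}\}$, but, as in the proof of Theorem~\ref{Thm:variant:A4:nonres}, for $n$ large one has $\langle\dot z_n(t),-Jz_n(t)\rangle>0$, so the axes are crossed transversally and that set has measure zero. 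Passing to the weak $L^2$ limit of $\Gamma_n$ and the uniform limit of $z_n$, one obtains a non-trivial solution $z$ of the convex-combination system with $\|z\|_\infty=1$, and Lemma~2.3 of~\cite{FonGar2011JDE} reduces the situation to $J\dot z=\nabla\SH_1(z)$ or $J\dot z=\nabla\SH_2(z)$.

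From this point on the argument proceeds verbatim as in Proposition~\ref{pro:for:priori:res}: Gronwall's lemma gives $|w_n(t)|\to+\infty$ uniformly, generalized polar coordinates $w_n(t)=r_n(t)\varphi(t+\theta_n(t))$ yield the angular identity leading to~\eqref{eq:to:use:Fatou}; the three-case analysis ($|r_n\varphi|\le\rho_n$, $\ge\rho_n^3$, and the interpolation region) produces a uniform lower bound allowing Fatou's lemma; and the resulting inequality contradicts $A5$ (and $A6$ in the symmetric case). Crucially, the only place where the structural decomposition of $F$ entered in Section~\ref{Proofs} was through the bound~\eqref{eq:bdd:of:vrho} and through the identification $F_{\rho_n}=F$ for $|w|\leq\rho_n$; both of these carry over to the quadrant-wise setting without change, because $\SH_1$, $\SH_2$ and $Q$ are globally defined on $\R^2$ and only the weighting function $\gamma$ depends on the quadrant.

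Once the a priori bound $\|w\|_\infty\le\bar\rho$ is established, I would close the proof exactly as in Section~\ref{Proof_1}: rewrite the Hamiltonian of the modified system in the form~\eqref{eq:of:K:rho}, note that $\widetilde P_\rho$ has bounded gradient with respect to $(x,y,w)$ because $\varPhi_\rho-\tfrac12(\SH_1+\SH_2)$ vanishes for $|w|\ge\rho^3$, and invoke~\cite[Theorem~1.1]{FonUll2024DIE} to obtain $M+1$ geometrically distinct $T$-periodic solutions with $y(0)\in\mathring{\mathcal D}$, which by the a priori bound are solutions of the original system~\eqref{sys:original}. The only delicate step, and the one on which I would focus, is the transversality/measure-zero remark for the definition of $\Gamma_n(t)$ on the axes; everything else is a word-for-word transcription of the proof of Theorem~\ref{Thm_res}.
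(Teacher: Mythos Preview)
Your proposal is correct and is exactly the argument the paper has in mind: the paper actually omits the proof entirely (``We skip the proof for briefness''), the clear implication being that one combines the quadrant-wise truncation and the transversality remark from the proof of Theorem~\ref{Thm:variant:A4:nonres} with the Fatou/Landesman--Lazer contradiction of Proposition~\ref{pro:for:priori:res}, precisely as you outline.
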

The cases of simple resonance can be treated similarly.

\begin{re}
It is worth noticing that we could discuss a more general case in assumption $A4'$, i.e., instead of using the quadrants $E_j$, consider $\alpha_1 < \alpha_2 < \dots < \alpha_n= \alpha_1 + 2 \pi$, and define the planar sectors
$$
\widehat{E}_j= \{ r e^{i \theta}\; : \; \alpha_j \leq \theta \leq \alpha_{j+1}\,,\; r \geq 0  \}\,.
$$

\end{re}

\medbreak

We now consider some variants of our main results by modifying $A3$, and to this aim, we first recall some definitions. A closed convex bounded subset $\mathcal{D}$ of $\mathbb{R}^M$ having nonempty interior $\mathring {\mathcal{D}}$ is said to be a convex body of $\mathbb{R}^{M}$. If we assume that $\mathcal{D}$ has a smooth boundary, then we denote the unit outward normal at $\xi \in \partial \mathcal{D}$ by $\nu_{\mathcal{D}}(\xi)$. Moreover, we say that $\mathcal{D}$ is strongly convex if for any $k\in \partial \mathcal{D}$, the map $\mathcal{F}: \mathcal{D} \to\R$ defined by $\mathcal{F}(\xi)= \langle \xi-k , \nu_{\mathcal{D}}(k)\rangle$ has a unique maximum point at $\xi=k$.

\medbreak

First we state the following ``avoiding rays'' assumption.

\medbreak

\noindent $A3'$. There exists a convex body $\mathcal{D}$ of $\mathbb{R}^{M}$, having a smooth boundary, such that for $\sigma \in \{-1,1\}$ and for every $C^1$-function $\mathcal{W}:[0,T] \to \R^{2L}$, all the solutions $(x,y)$ of system~\eqref{eq:of:hamiltonian:unperterbed:for:higher}
starting with $y(0)\in{\cal D}$ are defined on $[0,T]$, and 
$$
y(0) \in \partial{\mathcal{D}}\quad\Rightarrow\quad  
x(T)-x(0) \notin \{ \sigma\lambda\, \nu_{\mathcal{D}}(y(0)):\lambda\ge0 \}\,. 
$$

\begin{Thm}
If in the statements of Theorem~\ref{Thm_nonres},~\ref{Thm_simple_res:below},~\ref{Thm_simple_res:above},~\ref{Thm_res}, we replace assumption $A3$ by $A3'$, the same conclusion holds.
\end{Thm}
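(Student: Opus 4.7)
The plan is to follow verbatim the scheme used in the proofs of Theorems~\ref{Thm_nonres},~\ref{Thm_simple_res:below},~\ref{Thm_simple_res:above} and~\ref{Thm_res}, observing that the twist assumption $A3$ enters only at one single point: the final appeal to a higher-dimensional Poincar\'e--Birkhoff type theorem (namely~\cite[Theorem~1.1]{FonUll2024DIE}) that produces the $M+1$ geometrically distinct $T$-periodic solutions of the modified system~\eqref{sys_modified}. All the preparatory constructions (the cut-off $\eta_\rho$ and the modified functions $\varPhi_\rho$, $F_\rho$, the estimate~\eqref{eq:bdd:of:vrho}, and the splitting~\eqref{eq:of:K:rho} of the Hamiltonian $K_\rho$) are manifestly independent of the geometry of the set $\mathcal{D}$.

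First I would revisit Propositions~\ref{pro:for:priori} and~\ref{pro:for:priori:res}: each produces an a priori bound $\|w\|_\infty \leq \overline{\rho}$ for the $w$-component of any $T$-periodic solution of~\eqref{sys_modified}, relying only on $A1$, $A2$, $A4$, the (non)resonance condition, and (for the resonant case) on the Landesman--Lazer assumptions $A5$, $A6$. Crucially, $A3$ is never invoked in those proofs. Therefore the same a priori bounds hold verbatim under $A3'$ for each of the four resonance regimes.

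Next I would replace the invocation of~\cite[Theorem~1.1]{FonUll2024DIE} by its ``avoiding rays'' counterpart formulated on a smooth convex body, of the type developed in the convex-body framework of~\cite{FonUll2024NODEA,FonGarSfe2023JMAA}. Since the a priori bound yields the compactness of the $w$-component in $C([0,T],\R^2)$ (and, via the third equation of~\eqref{sys_modified}, in $C^1$), the coupling term $\widetilde{P}_\rho$ in~\eqref{eq:of:K:rho} still has bounded gradient with respect to $(x,y,w)$ on the relevant compact set. This is precisely the hypothesis required by the convex-body version of the Poincar\'e--Birkhoff theorem to yield $M+1$ geometrically distinct $T$-periodic solutions with $y(0)\in\mathring{\mathcal{D}}$. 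The a priori bound then identifies them as solutions of the original system~\eqref{sys:original}.

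The main obstacle I foresee is matching the precise statement of the convex-body Poincar\'e--Birkhoff result with the form of our coupled Hamiltonian: one has to verify that smoothness of $\partial\mathcal{D}$ (without any strong convexity) is enough, and that the ``avoiding rays'' condition encoded in $A3'$ is exactly the boundary transversality hypothesis of the abstract theorem. Once this identification is made, the four arguments transcribe line by line and the unified statement follows without additional work.
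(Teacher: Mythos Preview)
Your proposal is correct and matches the paper's own proof almost exactly: the paper likewise observes that the only place $A3$ is used is the final invocation of~\cite[Theorem~1.1]{FonUll2024DIE}, and simply replaces that call by~\cite[Theorem~3.1]{FonUll2024DIE} (the ``avoiding rays'' version on a smooth convex body, in the same reference rather than in~\cite{FonUll2024NODEA,FonGarSfe2023JMAA}). Your cautionary remark about matching hypotheses is thus resolved by that specific citation, and no further argument is needed.
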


\begin{proof}
The arguments are the same as in Section~\ref{Proofs}, with the only difference that instead of applying~\cite[Theorem 1.1]{FonUll2024DIE}, we apply~\cite[Theorem 3.1]{FonUll2024DIE}.
\end{proof}

\medbreak

Now we state the following ``indefinite twist'' assumption.

\medbreak

\ni $A3''$. There are a strongly convex body ${\cal D}$ of $\R^M$ having a smooth boundary and a symmetric regular $M \times M$ matrix $\mathbb{A}$ such that for every $C^1$-function $\mathcal{W}:[0,T] \to \R^{2L}$, all the solutions $(x,y)$ of system~\eqref{eq:of:hamiltonian:unperterbed:for:higher}
starting with $y(0)\in{\cal D}$ are defined on $[0,T]$, and 
$$
y(0) \in \partial{\mathcal{D}}\quad\Rightarrow\quad    
\langle x(T)-x(0)\,,\, \mathbb{A} \nu_{\mathcal{D}}(y(0)) \rangle > 0\,.
$$

\medbreak

\begin{Thm}
If in the statements of Theorem~\ref{Thm_nonres},~\ref{Thm_simple_res:below},~\ref{Thm_simple_res:above},~\ref{Thm_res}, we replace assumption $A3$ by $A3''$, the same conclusion holds.
\end{Thm}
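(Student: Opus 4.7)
The plan is to mirror exactly the scheme used for the previous variant (where $A3$ was replaced by $A3'$), since assumption $A3''$ enters the argument only at the very end, at the step in which the higher dimensional Poincar\'e--Birkhoff type result is invoked. Therefore, the bulk of the machinery developed in Section~\ref{Proofs} transfers word for word, and only the final applied theorem from~\cite{FonUll2024DIE} needs to be replaced by its ``indefinite twist'' counterpart.

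More concretely, I would first observe that the construction of the modified system~\eqref{sys_modified}, the definitions of $\varPhi_\rho$, $\gamma_\rho$, $v_\rho$, $F_\rho$, as well as the bound~\eqref{eq:bdd:of:vrho}, do not depend on the twist assumption in any way; they remain in force unchanged. The same is true for the a priori estimates: the proofs of Propositions~\ref{pro:for:priori} and~\ref{pro:for:priori:res} use only $A2$, $A4$, the growth bound~\eqref{eq:of:lin:growth}, the resonance/nonresonance information on $\tau_1,\tau_2$, and, in the resonant case, the Landesman--Lazer conditions $A5$, $A6$. Assumption $A3$ is never invoked in those proofs. Hence, under $A3''$ (with the same $A1$, $A2$, $A4$, and the appropriate resonance conditions of the theorem one is proving), there still exists $\ov\rho>1$ such that every $T$-periodic solution of~\eqref{sys_modified} satisfies $\|w\|_\infty\le\ov\rho$.

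Once the a priori bound is available, one rewrites the Hamiltonian of~\eqref{sys_modified} exactly as in~\eqref{eq:of:K:rho}, obtaining a perturbation $\widetilde P_\rho$ with bounded gradient in $(x,y,w)$. At this point, instead of using~\cite[Theorem~1.1]{FonUll2024DIE} (tailored to $A3$) or~\cite[Theorem~3.1]{FonUll2024DIE} (tailored to $A3'$), I would apply the ``indefinite twist'' variant from the same paper, which is precisely designed for an assumption of the form $A3''$ on a strongly convex body $\mathcal{D}$ with an associated symmetric regular matrix $\mathbb{A}$. This yields at least $M+1$ geometrically distinct $T$-periodic solutions of~\eqref{sys_modified} with $y(0)\in\mathring{\mathcal{D}}$, and the a priori bound then forces them to be genuine solutions of the original system~\eqref{sys:original}.

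The main (and essentially only) obstacle is verifying that the hypotheses of the indefinite twist theorem from~\cite{FonUll2024DIE} are in fact met by the coupled, $w$-dependent system~\eqref{sys_modified}. Specifically, I would need to check that the twist condition in $A3''$, which is formulated for the $(x,y)$-subsystem with an arbitrary $C^1$ curve $\mathcal{W}:[0,T]\to\R^2$ in place of $w$, propagates through the coupling with the (now globally bounded a priori) component $w(t)$ exactly as in the corresponding steps of the proofs under $A3$ and $A3'$. This is a bookkeeping check of the same type already carried out in Section~\ref{Proofs}, and no new analytic ingredient is needed.
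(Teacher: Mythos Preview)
Your proposal is correct and follows exactly the paper's approach: the paper's proof is the single line ``Apply \cite[Theorem~3.2]{FonUll2024DIE} instead of \cite[Theorem~1.1]{FonUll2024DIE} in Section~\ref{Proofs},'' and the ``indefinite twist'' variant you invoke is precisely that Theorem~3.2. Your additional remarks that the a priori bounds are independent of the twist assumption and that only the final Poincar\'e--Birkhoff type input needs to be swapped are accurate elaborations of this one-line argument.
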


\begin{proof}
Apply~\cite[Theorem 3.2]{FonUll2024DIE} instead of~\cite[Theorem 1.1]{FonUll2024DIE} in Section~\ref{Proofs}.
\end{proof}

\section{Applications to scalar second order equations}\label{examples}

As a direct consequence of Theorem~\ref{Thm_res}, we discuss the main results in \cite[Theorem 5, Theorem 14]{FonMamSfe2024Preprint}. We consider the following system
\begin{equation}\label{sys:conseq}
\begin{cases}
\dot{x}= \nabla_y \mathcal{H}(t,x,y)+\nabla_y P(t,x,y,u)\,,\\  
\dot{y}= -\nabla_x \mathcal{H}(t,x,y)-\nabla_xP(t,x,y,u)\,, \\
\ddot{u} + g(t,u)  =\partial_{u} P(t,x,u)\,,
\end{cases}
\end{equation}
where all the involved functions are continuous and $T$ periodic in the variable $t$.

\medbreak

\noindent $B4.$ There exist some positive constants $\mu_1, \mu_2, \nu_1, \nu_2, C_6, C_7$ such that
\begin{equation}\label{eq:of:bdd:of:g:1}
\begin{cases}
    g(t,u) \leq \nu_1 u + C_6 \quad \hbox{if } u \leq 0\,,\\
    g(t,u) \geq \mu_1 u - C_6 \quad \hbox{if } u \geq 0\,,
\end{cases}
\end{equation}
\begin{equation}\label{eq:of:bdd:of:g:2}
\begin{cases}
    g(t,u) \geq \nu_2 u - C_7 \quad \hbox{if } u \leq 0\,,\\
    g(t,u) \leq \mu_2 u + C_7 \quad \hbox{if } u \geq 0\,.
\end{cases}
\end{equation}
Moreover, we consider the following Landesman--Lazer conditions.

\medbreak

\noindent $B5.$ For a nontrivial $T$-periodic solution $\varphi$ of $\ddot{\varphi}+ \mu_1 \varphi^{+} -\nu_1 \varphi^{-} =0$, one has
\begin{multline}\label{LL_1ex}
\int_{ \{ \varphi<0\}}\liminf\limits_{u \to - \infty}\big[ \nu_1 u- g(t,u)\big] |\varphi(t)| dt\\ 
+ \int_{ \{ \varphi>0\}}\liminf\limits_{u \to + \infty}\big[  g(t,u)-\mu_1 u\big] \varphi(t) dt>\ov{m}\int_{0}^{T}|\varphi(t)| dt\,.
\end{multline}

\medbreak

\noindent $B6.$
For a nontrivial $T$-periodic solution $\psi$ of $\ddot{\psi}+ \mu_2 \psi^{+} -\nu_2 \psi^{-} =0$, one has
\begin{multline}\label{LL_2ex}
\int_{ \{ \psi<0\}}\liminf\limits_{u \to - \infty}\big[  g(t,u)-\nu_2 u\big] |\psi(t)| dt\\ 
+ \int_{ \{ \psi>0\}}\liminf\limits_{u \to + \infty}\big[  \mu_2 u -g(t,u)\big] \psi(t) dt>\ov{m}\int_{0}^{T}|\psi(t)| dt\,.
\end{multline}

 \medbreak
\begin{Thm}\label{Thm_res:conseq}
    Let $A1-A3$ and $B4-B6$ hold true, and assume that there exists a positive integer $N$ such that
    \begin{equation}\label{nonres_condition:ex}
\frac{T}{N+1} = \frac{\pi}{\sqrt{\mu_2}}+\frac{\pi}{\sqrt{\nu_2}} \leq \frac{\pi}{\sqrt{\mu_1}}+\frac{\pi}{\sqrt{\nu_1}} = \frac{T}{N}\,.    
    \end{equation}
    Then system~\eqref{sys:conseq} has at least $M+1$ geometrically distinct $T$-periodic solutions, with $y(0)\in \mathring{\mathcal{D}}$.
\end{Thm}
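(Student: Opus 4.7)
My plan is to view the scalar equation in~\eqref{sys:conseq} as a planar Hamiltonian system and then apply the quadrant-wise variant Theorem~\ref{Thm:variant:A4:res} of Theorem~\ref{Thm_res}. Setting $v=\dot u$ and $w=(u,v)\in\R^{2}$, the third equation of~\eqref{sys:conseq} is equivalent to a planar Hamiltonian system of the form $J\dot w = F(t,w) + \nabla_{w}\widetilde P$, with $F(t,u,v) = (g(t,u),v) = \nabla_{w}K(t,w)$, where $K(t,u,v)=\int_{0}^{u}g(t,s)\,ds + \tfrac12 v^{2}$ and $\widetilde P$ is a sign-adjusted version of $P$. The natural choice of extremal Hamiltonians is
\[
\mathscr{H}_{i}(u,v) = \tfrac12 v^{2} + \tfrac12\mu_{i}(u^{+})^{2} + \tfrac12\nu_{i}(u^{-})^{2}, \qquad i=1,2,
\]
which are $C^{1}$, positive, positively $2$-homogeneous, and satisfy $\mathscr{H}_{1}\le\mathscr{H}_{2}$. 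The associated planar systems $J\dot w = \nabla\mathscr{H}_{i}(w)$ are equivalent to the asymmetric oscillators $\ddot u + \mu_{i}u^{+} - \nu_{i}u^{-}=0$, whose minimal periods are exactly $\tau_{i}=\pi/\sqrt{\mu_{i}}+\pi/\sqrt{\nu_{i}}$, matching the resonance condition~\eqref{nonres_condition:ex}.

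\medbreak

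I will next verify the remaining hypotheses. Since $g$ behaves differently in $\{u>0\}$ and $\{u<0\}$, the quadrant-wise $A4'$ is the natural assumption: the $v$-component of $F$ matches $\partial_{v}\mathscr{H}_{i}=v$ automatically, and by $B4$ the truncation
\[
\bar g(t,u) := \begin{cases}\min\bigl(\max(g(t,u),\mu_{1}u),\,\mu_{2}u\bigr), & u\ge 0,\\[2pt] \max\bigl(\min(g(t,u),\nu_{1}u),\,\nu_{2}u\bigr), & u\le 0,\end{cases}
\]
lies between the two linear pieces $\partial_{u}\mathscr{H}_{1}$ and $\partial_{u}\mathscr{H}_{2}$, so it is a convex combination with coefficient $\gamma\in[0,1]$, and the remainder satisfies $|g-\bar g|\le C_{6}+C_{7}$. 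Setting $Q(t,u):=\int_{0}^{u}[g-\bar g](t,s)\,ds$, independent of $v$, produces a bounded $\nabla_{w}Q$ and completes the $A4'$ decomposition. For the Landesman--Lazer side, let $\Phi=(\varphi,\dot\varphi)$ denote the planar orbit corresponding to the scalar orbit $\varphi$ of $B5$. Using Euler's identity $2\mathscr{H}_{1}(\Phi)=1$, the integrand of $A5$ simplifies to
\[
\varphi(t+s)\bigl[g(t,\lambda\varphi(t+s)) - \lambda\bigl(\mu_{1}\varphi^{+}(t+s) - \nu_{1}\varphi^{-}(t+s)\bigr)\bigr],
\]
which, after splitting according to the sign of $\varphi(t+s)$ and substituting $u=\lambda\varphi(t+s)\to\pm\infty$, reproduces~\eqref{LL_1ex}; the phase-shift parameter $\theta\in[0,T]$ in $A5$ is handled by reading $B5$ as holding for every nontrivial $T$-periodic solution. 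The analogous computation turns $A6$ into $B6$.

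\medbreak

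The hard part will be that the strict inequality $\mathscr{H}_{1}<\mathscr{H}_{2}$ on $\R^{2}\setminus\{0\}$ demanded by Theorem~\ref{Thm_res} fails on the $v$-axis, where both Hamiltonians reduce to $v^{2}/2$. Nevertheless, the planar orbit $\Phi$ crosses this axis transversally at isolated instants, so $\mathscr{H}_{2}(\Phi(t))-\mathscr{H}_{1}(\Phi(t))>0$ on a set of full measure in $[0,T]$. Re-examining Case~2 of the proof of Proposition~\ref{pro:for:priori:res}, the uniform lower bound $\mathscr{H}_{2}(\varphi)-\mathscr{H}_{1}(\varphi)\ge\varepsilon$ should be replaced by an a.e.\ positive bound; combined with $\|w_{n}\|_{\infty}\to+\infty$, the integrand still tends to $+\infty$ on a set of positive measure, and Fatou's Lemma continues to yield the desired contradiction. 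Alternatively, one may invoke the perturbation trick from the proof of Theorem~\ref{Thm_simple_res:below}, slightly modifying $\mathscr{H}_{1}$ and $\mathscr{H}_{2}$ so that the strict inequality holds everywhere. With this mild adaptation, Theorem~\ref{Thm:variant:A4:res} applies and produces the $M+1$ geometrically distinct $T$-periodic solutions with $y(0)\in\mathring{\mathcal{D}}$.
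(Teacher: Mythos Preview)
Your approach coincides with the paper's: write the scalar equation as a planar system via $w=(u,\dot u)$, take the asymmetric-oscillator Hamiltonians $\mathscr{H}_{i}(u,v)=\tfrac12 v^{2}+\tfrac12\mu_{i}(u^{+})^{2}+\tfrac12\nu_{i}(u^{-})^{2}$, verify the quadrant-wise decomposition $A4'$, and apply Theorem~\ref{Thm:variant:A4:res}. The only presentational difference is that the paper obtains the decomposition by quoting a lemma of Fabry~\cite{Fab1995JDE}, writing $g(t,u)=\zeta_{1}(t,u)u^{+}-\zeta_{2}(t,u)u^{-}+h(t,u)$ with $\mu_{1}\le\zeta_{1}\le\mu_{2}$, $\nu_{1}\le\zeta_{2}\le\nu_{2}$ and $h$ bounded, then setting $\gamma_{1}=\gamma_{4}=(\zeta_{1}-\mu_{1})/(\mu_{2}-\mu_{1})$ and $\gamma_{2}=\gamma_{3}=(\zeta_{2}-\nu_{1})/(\nu_{2}-\nu_{1})$; your explicit truncation $\bar g$ is the same construction in disguise. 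You are in fact more careful than the paper on one point: the hypothesis $\mathscr{H}_{1}(w)<\mathscr{H}_{2}(w)$ for $w\ne0$ required in Theorem~\ref{Thm:variant:A4:res} does fail on the $v$-axis, and the paper simply invokes the theorem without comment. Your observation that along the reference orbit one has $\mathscr{H}_{2}(\Phi(t))-\mathscr{H}_{1}(\Phi(t))>0$ except at isolated instants, so that the lower bounds in the proof of Proposition~\ref{pro:for:priori:res} survive and Fatou still applies, is the correct way to close this gap. One minor point to tighten in your reduction of $A5$ to $B5$: the right-hand side of $A5$ involves the planar norm $|\Phi|$, whereas $B5$ carries only the scalar $|\varphi|$; the match works because in system~\eqref{sys:conseq} the coupling $P$ is independent of $v$, so $\langle\nabla_{w}P,\Phi\rangle=\partial_{u}P\cdot\varphi$ and the sharper bound $\bar m\int|\varphi|$ is what actually enters the estimate.
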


\begin{proof}
    It can be shown (see e.g. \cite[Lemma 1]{Fab1995JDE}) that using conditions~\eqref{eq:of:bdd:of:g:1} and~\eqref{eq:of:bdd:of:g:2}, there exists a bounded function $h(t,u)$ such that
\begin{equation}\label{eq:of:g:in:appl}
    g(t,u)= \zeta_1(t,u) u^{+} - \zeta_2(t,u)u^{-} +h(t,u)\,,
\end{equation}
where  $u^{+}= \max\{u,0\}$, $u^{-}= \max\{-u,0\}$, and 
\begin{equation}\label{eq:of:zetas}
    \mu_1 \leq \zeta_1(t,u) \leq \mu_2 \,, \qquad \nu_1 \leq \zeta_2(t,u) \leq \nu_2\,,
\end{equation}
for almost every $t \in [0,T]$, and every $u \in \R$.
System \eqref{sys:conseq} can be written as
\begin{equation}\label{sys:conseq:2}
\begin{cases}
\dot{x}= \nabla_y \mathcal{H}(t,x,y)+\nabla_y P(t,x,y,u)\,,\\  
\dot{y}= -\nabla_x \mathcal{H}(t,x,y)-\nabla_xP(t,x,y,u)\,, \\
\dot{u} = v\,, \quad
-\dot{v} =  g(t,u) - \partial{u} P(t,x,u)\,.
\end{cases}
\end{equation}

\medbreak

Now for $w=(u,v)$ with the aim of applying Theorem \ref{Thm:variant:A4:res}, define
$$
\SH_1(w) = \frac12 \bigg[ \mu_1 (u^{+})^2 + \nu_1 (u^{-})^2 + v^{2} \bigg]\,, \quad  \SH_2(w) = \frac12 \bigg[ \mu_2 (u^{+})^2 + \nu_2 (u^{-})^2 + v^{2} \bigg] 
$$
$$
Q(t,w)=\int_{0}^{u} h(t,s)ds, \quad F(t,w)=
\Bigg(\begin{matrix}
    g(t,w) \\ v
\end{matrix}\Bigg)\,.
$$
For $u > 0,$ $w \in E_1 \cup E_4$ and $u^{-}=0$. Thus by choosing 
$$
\gamma_1(t,w)= \gamma_4(t,w)= \frac{\zeta_1(t,u)- \mu_1}{\mu_2-\mu_1}\,,
$$
it is clear by~\eqref{eq:of:zetas} that 
$$
0 \leq \gamma_1(t,w)= \gamma_4(t,w) \leq 1\,.
$$
Similarly, for $u < 0$, $u^{+}=0$ and $w \in E_2 \cup E_3$. Thus by defining 
$$
\gamma_2(t,w)= \gamma_3(t,w)= \frac{\zeta_2(t,u)- \nu_1}{\nu_2-\nu_1}\,,
$$
we see by~\eqref{eq:of:zetas} that 
$$
0 \leq \gamma_1(t,w)= \gamma_4(t,w) \leq 1\,.
$$
Hence we can use Theorem~\ref{Thm:variant:A4:res}, and then system~\eqref{sys:conseq} has at least $M+1$ geometrically distinct $T$-periodic solutions $(x,y,u)$ such that $y(0)\in \mathring{\mathcal{D}}.$
\end{proof}

We remark here that the main result for the nonresonance case \cite[Theorem 2]{FonMamSfe2024Preprint}
can also be obtained by a similar way using Theorem~\ref{Thm:variant:A4:nonres}.

\medbreak

As an example, for $M=1$, we consider the coupling of a pendulum-like equation with an asymmetric oscillator, i.e.,
\begin{equation}\label{system:for:the:example}
\begin{cases}
\ddot{q}+A \sin q  =e(t) +\partial_{q} P(t,x,u)\,,\\
\ddot{u} + f(u) + h(t,u) =\partial_{u} P(t,x,u)\,,
\end{cases}
\end{equation}
where $h$ is bounded,
$$
f(u)= \Bigg( \frac{\mu_1 + \mu_2}{2} + \frac{\mu_2 - \mu_1}{2} \cos u \Bigg)u^{+} - \Bigg( \frac{\nu_1 + \nu_2}{2} + \frac{\nu_2 - \nu_1}{2} \sin(u^{3}) \Bigg) u^{-}\,
$$
and the constants $A,\mu_1, \mu_2,\nu_1, \nu_2$ are all positive with $\mu_1 \leq \mu_2$ and $\nu_1 \leq \nu_2$. Assume that $P(t,x,u)$ is $T$-periodic in $t$ and $2\pi$-periodic in $x$, and that it has a bounded gradient with respect to $(x,u)$. Setting $E(t)=\int_0^te(s)\,ds$, system~\eqref{system:for:the:example} is equivalent to
\begin{equation*}\label{system:for:the:example:2}
\begin{cases}
\dot{q}= p+E(t)\,, \quad
\dot{p} = -A \sin q  + \partial_{q} P(t,x,u)\,,\\
\dot{u} = v\,,\quad
\dot{v}= - f(u) - h(t,u) +\partial_{u} P(t,x,u)\,.
\end{cases}
\end{equation*}
Assuming $e(t)$ to be $T$-periodic with
$$
\int_0^Te(t)\,dt=0\,,
$$
the function $E(t)$ is $T$-periodic, as well.


It has been proved in \cite[Section 3]{FonUll2024DIE}, that the twist condition $A3$ holds true. 
\medbreak
On the other hand, if we define $\mathscr{H}_1$, $\SH_2$ by
$$
\SH_1(w) = \frac12 \bigg[ \mu_1 (u^{+})^2 + \nu_1 (u^{-})^2 + v^{2} \bigg]\,, \quad  \SH_2(w) = \frac12 \bigg[ \mu_2 (u^{+})^2 + \nu_2 (u^{-})^2 + v^{2} \bigg] \,,
$$
with $w=(u,v)$, then $\mathscr{H}_1$ and $\SH_2$ are positive, positively homogeneous of degree $2$, $\SH_1(w) < \SH_2(w)$ for all $w \ne 0$. Thus all the solutions of system $J\dot{w}=\nabla\mathscr{H}_1(w)$ and $J \dot{w} = \nabla\SH_2(w)$ are periodic with fixed period, respectively 
$$
\tau_1 = \frac{\pi}{\sqrt{\mu_1}}+\frac{\pi}{\sqrt{\nu_1}}\,, \quad \tau_2=  \frac{\pi}{\sqrt{\mu_2}}+\frac{\pi}{\sqrt{\nu_2}}\,.
$$
Moreover, we assume the following Landesman--Lazer conditions.

\medbreak

\noindent $B5'.$ For a nontrivial $T$-periodic solution $\varphi$ of $\ddot{\varphi}+ \mu_1 \varphi^{+} -\nu_1 \varphi^{-} =0$, one has
\begin{multline}\label{LL_1ex_2}
\int_{ \{ \varphi<0\}}\liminf\limits_{u \to - \infty}\big[ - h(t,u)\big] |\varphi(t)| dt\\ 
+ \int_{ \{ \varphi>0\}}\liminf\limits_{u \to + \infty}  h(t,u) \varphi(t) dt>\ov{m}\int_{0}^{T}|\varphi(t)| dt\,.
\end{multline}

\medbreak

\noindent $B6'.$
For a nontrivial $T$-periodic solution $\psi$ of $\ddot{\psi}+ \mu_2 \psi^{+} -\nu_2 \psi^{-} =0$, one has
\begin{multline}\label{LL_2ex_2}
\int_{ \{ \psi<0\}}\liminf\limits_{u \to - \infty} h(t,u) |\psi(t)| dt\\ 
+ \int_{ \{ \psi>0\}}\liminf\limits_{u \to + \infty}\big[   -h(t,u)\big] \psi(t) dt>\ov{m}\int_{0}^{T}|\psi(t)| dt\,.
\end{multline}

 \medbreak

All the assumptions of Theorem~\ref{Thm_res:conseq} are satisfied, and we can thus state the following.

\begin{cor}\label{cor1}
In the above setting, assume that $B5', B6'$ hold, and 
$$
\frac{T}{N+1} = \frac{\pi}{\sqrt{\mu_2}}+\frac{\pi}{\sqrt{\nu_2}} \leq \frac{\pi}{\sqrt{\mu_1}}+\frac{\pi}{\sqrt{\nu_1}} = \frac{T}{N}\,.
$$
Then system~\eqref{system:for:the:example} has at least two geometrically distinct $T$-periodic solutions. 
\end{cor}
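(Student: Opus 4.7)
The plan is to verify that Theorem \ref{Thm_res:conseq} applies to system \eqref{system:for:the:example} with $M=1$, which then directly produces the $M+1 = 2$ geometrically distinct $T$-periodic solutions. First I would rewrite the pendulum equation in Hamiltonian form via $\mathcal{H}(t,x,y) = \tfrac{1}{2}y^2 + y\,E(t) - A\cos x$, so that \eqref{system:for:the:example} becomes an instance of \eqref{sys:conseq} with nonlinearity $g(t,u) = f(u) + h(t,u)$.

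The checks of $A1$--$A3$ are almost immediate: $A1$ holds since $\mathcal{H}$ is $2\pi$-periodic in $x$, $A2$ is built into the hypotheses on $P$, and $A3$ is already noted in the paragraph preceding the corollary to follow from \cite[Section 3]{FonUll2024DIE}. For $B4$, one observes that on $u \geq 0$ the coefficient $\tfrac{\mu_1+\mu_2}{2} + \tfrac{\mu_2-\mu_1}{2}\cos u$ lies in $[\mu_1,\mu_2]$, and on $u \leq 0$ the coefficient $\tfrac{\nu_1+\nu_2}{2} + \tfrac{\nu_2-\nu_1}{2}\sin(u^3)$ lies in $[\nu_1,\nu_2]$; combined with the boundedness of $h$, these yield the required two-sided linear bounds.

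The substantive step is to show that the Landesman--Lazer conditions $B5', B6'$ imply the more general $B5, B6$ required by Theorem \ref{Thm_res:conseq}. A direct expansion gives $g(t,u) - \mu_1 u = \tfrac{\mu_2-\mu_1}{2}(1+\cos u)\,u + h(t,u)$ for $u>0$ and $\nu_1 u - g(t,u) = \tfrac{\nu_2-\nu_1}{2}(1+\sin(u^3))(-u) - h(t,u)$ for $u<0$; in each expression the non-$h$ term is nonnegative and vanishes along the sequences with $\cos u_n = -1$, respectively $\sin(u_n^3) = -1$. Hence the inferior limits reduce exactly to those of $\pm h(t,u)$ appearing in $B5'$, and a symmetric computation (using $\cos u_n = 1$ and $\sin(u_n^3) = 1$) takes $B6$ into $B6'$.

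The only (mild) technical point is to justify that the liminf is genuinely attained along these extremal subsequences of the trigonometric factors rather than being pushed up by the unbounded prefactor $u$. This is clear because $u(1+\cos u)$ and $(-u)(1+\sin(u^3))$ are nonnegative in the relevant ranges of $u$, so the oscillatory product can only raise the expression above $\pm h$ and never lower it. With $A1$--$A3$ and $B4$--$B6$ verified, Theorem \ref{Thm_res:conseq} applies and yields the claimed two geometrically distinct $T$-periodic solutions, completing the proof of the corollary.
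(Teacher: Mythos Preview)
Your proposal is correct and follows exactly the paper's approach: the paper simply asserts, just before stating the corollary, that ``All the assumptions of Theorem~\ref{Thm_res:conseq} are satisfied'', and you have filled in the verification of $A1$--$A3$, $B4$, and the implication $B5',B6'\Rightarrow B5,B6$. One minor remark: you do not actually need the liminfs to ``reduce exactly'' to those of $\pm h$; the nonnegativity of the oscillatory terms already gives $\liminf[g(t,u)-\mu_1 u]\geq\liminf h(t,u)$ (and the analogous inequalities), which is the only direction required for $B5'\Rightarrow B5$ and $B6'\Rightarrow B6$.
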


The above result generalizes a classical result by Mawhin and Willem~\cite{MawWil1984JDE} on the multiplicity of periodic solutions for the pendulum equation.

\section{Neumann-type boundary value problem}\label{NBVP}

Consider system \eqref{sys:original} with Neumann-type boundary conditions
\begin{equation}\label{eq:of:nbc}
    \begin{cases}
        y(a)=0=y(b)\,,\\
        v(a)=0=v(b)\,,
    \end{cases}
\end{equation}
with $w=(u,v)$. Notice that in this case, we omit the periodicity in the variable $t$ of all the functions in system~\eqref{sys:original}. We only consider all the involved functions to be continuous and continuously differentiable with respect to $(x,y,w)$. For more details in this direction, we refer the reader to some recent advancements \cite{BosGar2016JAA, FonMamObeSfe2024NoDEA, FonOrt2023RCMP, MarSfe2023ADE, Sfe2020TMNA}.
\medbreak

For the Neumann-type boundary value problem, instead of assumptions $A3$, $A4$, $A5$ and $A6$, we state the following assumptions.

\medbreak
\ni $A3'''$. All the solutions of system~\eqref{sys:original} satisfying $y(a)=v(a)=0$ are defined on $[a,b]$. 

\medbreak

\noindent $A4'''.$ There are two functions $\gamma: [a,b]\times \R^2 \to [0,1]$ and $Q: [a,b]\times \R^2 \to \R$ such that
\begin{equation}\label{covex_comb.nbp}
F(t,w)=(1-\gamma(t,w))\nabla \mathscr{H}_1 (w)+\gamma(t,w)\nabla \mathscr{H}_2 (w) + \nabla_w Q(t,w)\,,
\end{equation}
where all the functions are continuous and $\nabla_w Q(t,w)$ is uniformly bounded, i.e., there exists $\widetilde{C}>0$ such that
\begin{equation}\label{eq:of:bdd:of:q:nbp}
|\nabla_w Q(t,w)| \leq  \widetilde{C}\,, \quad \hbox{ for every }\, (t,w) \in [a,b] \times \R^2\,.
\end{equation}

\medbreak

\noindent $A5'''.$ For every $\theta  \in[a,b]$, one has
\begin{equation}\label{LL_1a:nbp}
\int_{a}^{b}\liminf\limits_{(\lambda,\,s)\to(+\infty,\,\theta)}\big[\big\langle F\big(t,\lambda \varphi(t+s)\big),\,\varphi(t+s) \big\rangle -2\lambda \mathscr{H}_1\big(\varphi(t)\big)\big] \,dt>\ov{m}\int_{a}^{b}|\varphi(t)| dt\,.
\end{equation}

\medbreak

\noindent $A6'''.$
For every $\theta  \in[a,b]$, one has
\begin{equation}\label{LL_1b:nbp}
\int_{a}^{b}\liminf\limits_{(\lambda,\,s)\to(+\infty,\,\theta)}\big[2\lambda \mathscr{H}_2\big(\psi(t)\big)-\big\langle F\big(t,\lambda \psi(t+s)\big),\,\psi(t+s) \big\rangle \big] \,dt>\ov{m}\int_{a}^{b}|\psi(t)| dt\,.
\end{equation}

\medbreak

Considering problem \eqref{sys:original}--\eqref{eq:of:nbc}, if $u_0 <0$, for all solutions $\zeta$ of
\begin{equation}\label{system_z:higher1}
 \dot{u}= \nabla_v \mathscr{H}_j(u,v)\,,\qquad
 \dot{v} = - \nabla_u \mathscr{H}_j(u,v)\,,
\end{equation}
starting with $\zeta(0)=(u_{0},0)$, there is a first time $\tau_{j_+}>0$ for which $v(\tau_{j_+})=0$, while $v(t)>0$ for all $t \in\, ]0, \tau_{j_+}[$\,, and this time $\tau_{j_+}$ is independent of $u_0 <0$. Similarly, if $u_0 >0$, there is a first time $\tau_{j_-}>0$ for which $v(\tau_{j_-})=0$, while $v(t)<0$ for all $t \in\, ]0, \tau_{j_-}[$\,, and this time $\tau_{j_-}$ is independent of $u_0 >0$. Clearly enough, $\tau_j=\tau_{j_+}+ \tau_{j_-}$ for $j=1,2$.

Here is our first main result which is associated with a nonresonant situation.
\begin{Thm}\label{Thm_nonres:nbp}
    Let $A1-A2$ and $A3'''-A4'''$ hold true and for $j=1,2$, $\tau_{j_{+}}= \tau_{j_{-}}$. Assume moreover that there exists a positive integer $N$ such that
    \begin{equation}\label{nonres_condition:nbp}
\frac{b-a}{N+1}< \tau_{2_+} \leq \tau_{1_+} < \frac{b-a}{N}\,.
    \end{equation}
    Then problem~\eqref{sys:original}--~\eqref{eq:of:nbc} has at least $M+1$ geometrically distinct solutions.
\end{Thm}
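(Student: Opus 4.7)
The plan is to mirror the structure of Section~\ref{Proofs} essentially verbatim, with only the boundary condition and the corresponding existence theorem changed. First, the cutoff construction leading to the modified system~\eqref{sys_modified} (with $\eta_\rho$, $\varPhi_\rho$, $\gamma_\rho$, $v_\rho$, $F_\rho$) does not use periodicity and therefore carries over to the Neumann setting with no alteration. The task then reduces to (i) an a priori bound for solutions $(x,y,w)$ of~\eqref{sys_modified} satisfying~\eqref{eq:of:nbc}, and (ii) an appropriate Poincar\'e--Birkhoff-type existence theorem for the Neumann-type boundary value problem under the twist assumption $A3'''$.

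For (i), I would follow the proof of Proposition~\ref{pro:for:priori} line by line. Assuming by contradiction a sequence $(x_n,y_n,w_n)$ of solutions of~\eqref{sys_modified} with $\rho=\rho_n\to+\infty$ and $\|w_n\|_\infty>n$, set $z_n=w_n/\|w_n\|_\infty$. Then $z_n$ satisfies
\begin{equation*}
J\dot z_n = (1-\Gamma_n(t))\nabla\SH_1(z_n) + \Gamma_n(t)\nabla\SH_2(z_n) + \kappa_n(t),
\end{equation*}
with $\Gamma_n(t)\in[0,1]$ and $\kappa_n\to 0$ uniformly (using $A2$, \eqref{eq:of:bdd:of:q:nbp}, and the bound~\eqref{eq:bdd:of:vrho}). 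Writing $z_n=(u_n^z,v_n^z)$, the Neumann condition gives $v_n^z(a)=v_n^z(b)=0$. Passing to a subsequence $z_n\to z$ uniformly and weakly in $H^1$, and $\Gamma_n\to\Gamma$ weakly in $L^2$, one obtains $\|z\|_\infty=1$ and $z$ satisfies the limit convex-combination system, with $v_z(a)=v_z(b)=0$. By~\cite[Lemma 2.3]{FonGar2011JDE}, $z$ is in fact a solution of $J\dot z=\nabla\SH_j(z)$ for some $j\in\{1,2\}$.

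The key nonresonance step uses the hypothesis $\tau_{j_+}=\tau_{j_-}$: every non-trivial solution of the autonomous planar system $J\dot z=\nabla\SH_j(z)$ has zero second component exactly at the times $t_0+k\tau_{j_+}$, $k\in\N$. So $v_z(a)=v_z(b)=0$ forces $b-a=k\tau_{j_+}$ for some nonnegative integer $k$. But~\eqref{nonres_condition:nbp} yields $N\tau_{j_+}<b-a<(N+1)\tau_{j_+}$ for both $j=1,2$, so no such $k$ exists. Hence $z\equiv 0$, contradicting $\|z\|_\infty=1$; the a priori bound $\|w\|_\infty\le\bar\rho$ follows.

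With the a priori bound established, I would rewrite the modified Hamiltonian in the form~\eqref{eq:of:K:rho}, observe that $\widetilde P_\rho$ has bounded gradient with respect to $(x,y,w)$, and apply the Neumann-type analogue of \cite[Theorem 1.1]{FonUll2024DIE} for systems with twist condition $A3'''$ (such a result is available in~\cite{FonMamObeSfe2024NoDEA}), yielding at least $M+1$ geometrically distinct solutions of the modified system satisfying~\eqref{eq:of:nbc}. These are solutions of the original problem by the a priori bound. The main obstacle is step~(ii): securing the precise higher-dimensional Neumann-type Poincar\'e--Birkhoff theorem under $A3'''$ that plays the role of \cite[Theorem 1.1]{FonUll2024DIE}. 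The rest of the argument, including the a priori estimate, is a routine transcription of Section~\ref{Proof_1} from $[0,T]$-periodicity to the interval $[a,b]$ with endpoint conditions on $y$ and $v$.
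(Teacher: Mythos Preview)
Your overall architecture (carry over the cutoff construction, prove an a priori bound by normalizing and passing to the limit, then invoke an abstract existence result) matches the paper's, but two points need correction.

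First, you invoke \cite[Lemma~2.3]{FonGar2011JDE} to pass from the convex-combination limit system to a pure system $J\dot z=\nabla\SH_j(z)$. That lemma is proved for the \emph{periodic} boundary condition $z(0)=z(T)$; its rotation-counting argument uses periodicity and does not apply to $v_z(a)=v_z(b)=0$. The paper supplies the correct replacement, Lemma~\ref{Lemma:nbp}, whose proof uses half-rotations: writing the limit solution in polar coordinates, the angular inequality
\[
-\frac{\dot\theta(t)}{2\SH_2(\cos\theta,\sin\theta)}\le 1\le -\frac{\dot\theta(t)}{2\SH_1(\cos\theta,\sin\theta)}
\]
integrates to $m\,\tau_{2_+}\le b-a\le m\,\tau_{1_+}$, where $m$ is the number of half-turns between the two axis crossings $v_z(a)=v_z(b)=0$. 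Under~\eqref{nonres_condition:nbp} this forces $N<m<N+1$, an immediate contradiction; in particular you do not need to first reduce to a pure system at all in the nonresonant case. Your subsequent paragraph only treats the pure systems and therefore does not close the gap on its own.

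Second, assumption~$A3'''$ is \emph{not} a twist condition: it merely requires global existence on $[a,b]$ for solutions with $y(a)=v(a)=0$. Consequently the existence step is not a ``Neumann-type Poincar\'e--Birkhoff theorem''. The paper applies \cite[Theorem~1.1]{FonUll2024NODEA} (taken with $p=q=2$) to the modified Hamiltonian in the form~\eqref{eq:of:K:rho}; this is the correct reference to cite in place of \cite[Theorem~1.1]{FonUll2024DIE}, rather than \cite{FonMamObeSfe2024NoDEA}. With these two substitutions your outline becomes the paper's (omitted) proof.
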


\medbreak
Concerning the double resonance case, we have the following result. 

\begin{Thm}\label{Thm_res:nbp}
    Let $A1-A2$ and $A3'''-A6'''$ hold true, for $j=1,2$, $\tau_{j_{+}}= \tau_{j_{-}}$ and $\SH_1(w) < \SH_2(w)$ for all $w \in \R^2 \setminus \{0\}$. Assume moreover that there exists a positive integer $N$ such that
    \begin{equation}\label{res_condition:nbp}
\tau_{2_+}=\frac{b-a}{N+1} \quad \hbox{ and } \quad \tau_{1_+} = \frac{b-a}{N}\,.
    \end{equation}  
    Then the same conclusion of Theorem~\ref{Thm_nonres:nbp} holds.
\end{Thm}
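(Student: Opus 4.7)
The plan is to run the same two-step scheme used for Theorem~\ref{Thm_res}: first cut off $F$ to obtain a modified system whose nonlinearity equals $\tfrac12(\nabla\SH_1+\nabla\SH_2)$ outside a large ball, then establish an a priori bound on the $w$-component which lets us transfer an existence result for the modified problem back to the original one. The modification is literally the construction from Section~\ref{Proofs} (the cut-off $\eta_\rho$, the truncated Hamiltonian $\varPhi_\rho$, the decomposition $F_\rho=(1-\gamma_\rho)\nabla\SH_1+\gamma_\rho\nabla\SH_2+v_\rho+\nabla_w Q$, and the estimate~\eqref{eq:bdd:of:vrho}), since these are structural and insensitive to the boundary condition.

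The central step is the Neumann analogue of Proposition~\ref{pro:for:priori:res}: for $\rho$ large, every solution of~\eqref{sys_modified} satisfying~\eqref{eq:of:nbc} has $\|w\|_\infty\leq\ov\rho$. I would argue by contradiction along a sequence $(x_n,y_n,w_n)$ with $\|w_n\|_\infty\to\infty$, set $z_n=w_n/\|w_n\|_\infty$, and pass to a uniform limit $z$ with $\|z\|_\infty=1$ which is a Neumann solution of the convex-combination system $J\dot z=(1-\Gamma(t))\nabla\SH_1(z)+\Gamma(t)\nabla\SH_2(z)$ on $[a,b]$. By~\cite[Lemma 2.3]{FonGar2011JDE}, $z$ solves either $J\dot z=\nabla\SH_1(z)$ or $J\dot z=\nabla\SH_2(z)$; treat the first case (the other is symmetric), so $z(t)=R_0\varphi(t+\theta_0)$. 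Passing to polar coordinates $w_n(t)=r_n(t)\varphi(t+\theta_n(t))$, the hypothesis $\tau_{1_+}=\tau_{1_-}$ guarantees that a $\varphi$-orbit starting and ending on $\{v=0\}$ corresponds to $\theta_n(b)\equiv\theta_n(a)\pmod{\tau_1}$; combined with $\tau_{1_+}=(b-a)/N$ and the uniform convergence $z_n\to z$, this forces exactly $N$ half-rotations of $z_n$ between $a$ and $b$ for $n$ large, so that $\theta_n(a)=\theta_n(b)$. Integrating the angular equation over $[a,b]$ and dividing by $r^v_n(t)=r_n(t)/\|w_n\|_\infty$ yields the direct analogue of~\eqref{eq:to:use:Fatou}. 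The three-case lower bound used to justify Fatou's lemma (Cases~1--3 in Proposition~\ref{pro:for:priori:res}) works verbatim since it is pointwise in $t$ and uses only $\SH_1<\SH_2$, the Euler identity~\eqref{Euler_idty}, and~\eqref{half}. Passing to the inferior limit contradicts $A5'''$ when $z$ is aligned with $\varphi$; the symmetric argument contradicts $A6'''$ in the other case.

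Once the a priori bound is established, one rewrites the Hamiltonian of~\eqref{sys_modified} as $K_\rho$ in~\eqref{eq:of:K:rho}; the perturbation $\widetilde P_\rho$ has bounded gradient, so by the same reasoning that concludes the proof of Theorem~\ref{Thm_nonres:nbp} one obtains $M+1$ geometrically distinct solutions of the modified Neumann problem, which by the a priori bound are genuine solutions of~\eqref{sys:original}--\eqref{eq:of:nbc}. The main obstacle is the rotation-counting: one has to verify that the symmetry $\tau_{j_+}=\tau_{j_-}$ together with the resonance identity $\tau_{1_+}=(b-a)/N$ produces a clean ``closed angle'' condition $\theta_n(a)=\theta_n(b)$, because unlike the periodic setting this closure is not automatic and must be extracted from the interplay between the boundary condition $v=0$ and the half-period symmetry of $\varphi$ and $\psi$; once this is clarified, the Landesman--Lazer integrals on $[a,b]$ align with the natural periodicity of the reference solutions and the Fatou step proceeds as in Section~\ref{Proofs}.
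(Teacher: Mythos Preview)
Your overall scheme is correct and matches the paper's, but there is one real gap. You invoke \cite[Lemma~2.3]{FonGar2011JDE} to conclude that the limit $z$ solves either $J\dot z=\nabla\SH_1(z)$ or $J\dot z=\nabla\SH_2(z)$; however, that lemma is stated for $T$-periodic solutions, whereas here $z$ only satisfies the Neumann condition $v(a)=v(b)=0$ and is not periodic. This is precisely the point where the Neumann proof diverges from the periodic one: the paper proves a dedicated analogue (Lemma~\ref{Lemma:nbp}) in which the rotation-counting argument is redone with \emph{half}-rotations, using that a nontrivial Neumann solution of the convex-combination system performs an integer number $m$ of half-turns, so that integrating the angular inequality yields $m\tau_{2_+}\le b-a\le m\tau_{1_+}$ and hence, by~\eqref{res_condition:nbp}, $m\in\{N,N+1\}$; the equality cases then force $z$ to be an $\SH_1$- or $\SH_2$-solution exactly as in the periodic lemma.

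In other words, you placed the half-rotation bookkeeping at the $\theta_n(a)=\theta_n(b)$ step, but the paper needs it one step earlier, to identify the limit $z$. Once Lemma~\ref{Lemma:nbp} is in hand, the remainder of your argument (polar coordinates for $w_n$, the three-case lower bound, Fatou, and the contradiction with $A5'''$/$A6'''$) goes through just as you describe, and the existence step for the modified problem uses \cite[Theorem~1.1]{FonUll2024NODEA} (the Neumann counterpart of \cite[Theorem~1.1]{FonUll2024DIE}) rather than appealing generically to ``the same reasoning that concludes the proof of Theorem~\ref{Thm_nonres:nbp}''.
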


Let us remark here that a sufficient condition for having satisfied the assumption $\tau_{j_{+}}=\tau_{j_{-}}$ is that the function $\mathscr{H}_j$ is even in $v$. This is a frequent case in the applications, where, e.g., $\mathscr{H}_j$ is quadratic in $v$.

\medbreak

We only prove the double resonance result. In order to prove this theorem, we require the following result for the Neumann-type problem which is similar to~\cite[Lemma 2.3]{FonGar2011JDE} (for the periodic problem). We omit the proof of nonresonance case and statements of simple resonance cases for briefness.

\begin{lem}\label{Lemma:nbp}
    Assume that \eqref{res_condition:nbp} is satisfied. Then, if $v\in H^1 (a,b)$ solves 
    \begin{equation}\label{eq:Lemma}
        \begin{cases}
            J\dot{v}=\alpha(t)\nabla H_1 (v)+ (1-\alpha(t))\nabla H_2 (v)\,,\\
            v(a)=0=v(b)\,,
        \end{cases}
    \end{equation}
    being $\alpha \in L^2 (a,b)$, with $0\leq \alpha(t)\leq 1$ for almost every $t\in [0,T]$, then $v$ solves either
    $$
    J\dot{v}=\nabla H_1 (v)\,,
    $$
    or 
    $$
    J\dot{v}=\nabla H_2 (v)\,.
    $$
\end{lem}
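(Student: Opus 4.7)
My plan is to adapt the proof of~\cite[Lemma 2.3]{FonGar2011JDE} from the periodic setting to the Neumann one, exploiting the symmetry hypothesis $\tau_{j_+}=\tau_{j_-}$. First, I would observe that if $v(t_0)=0$ for some $t_0\in[a,b]$ then the bound $|J\dot v|\leq C|v|$ (which comes from the positive $1$-homogeneity of $\nabla\SH_1,\nabla\SH_2$) together with Gronwall's inequality gives $v\equiv 0$, and the statement is trivial. Assuming henceforth that $v(t)\neq 0$ for every $t\in[a,b]$, I would introduce two systems of generalized polar coordinates based on the reference orbits $\varphi$ and $\psi$ fixed in~\eqref{half}:
\begin{equation*}
v(t)=r(t)\,\varphi\!\left(t+\theta(t)\right), \qquad v(t)=\tilde r(t)\,\psi\!\left(t+\tilde\theta(t)\right),
\end{equation*}
with $r,\tilde r>0$ and $\theta,\tilde\theta$ absolutely continuous on $[a,b]$.

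Next, differentiating each representation, using the positive $1$-homogeneity of $\nabla\SH_i$ together with $J\dot\varphi=\nabla\SH_1(\varphi)$ and $J\dot\psi=\nabla\SH_2(\psi)$, and then projecting onto $\varphi(t+\theta(t))$ (respectively $\psi(t+\tilde\theta(t))$) via the Euler identity~\eqref{Euler_idty} and the normalization~\eqref{half}, I would arrive at
\begin{equation*}
\dot\theta(t)=\bigl(1-\alpha(t)\bigr)\bigl[\,2\SH_2(\varphi(t+\theta(t)))-1\,\bigr], \quad \dot{\tilde\theta}(t)=\alpha(t)\bigl[\,2\SH_1(\psi(t+\tilde\theta(t)))-1\,\bigr].
\end{equation*}
The hypothesis $\SH_1\leq\SH_2$ combined with $\SH_1(\varphi)=\SH_2(\psi)=1/2$ then yields $\dot\theta\geq 0\geq\dot{\tilde\theta}$ almost everywhere. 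Under the symmetry $\tau_{j_+}=\tau_{j_-}$, the orbit $\varphi$ meets $\{v_2=0\}$ precisely at the integer multiples of $\tau_{1_+}$, and similarly for $\psi$ and $\tau_{2_+}$. Hence the Neumann conditions $v_2(a)=v_2(b)=0$ force $a+\theta(a),\,b+\theta(b)\in\tau_{1_+}\Z$, and analogously for $\tilde\theta$. Letting $k$ denote the (common) number of crossings of $\{v_2=0\}$ accumulated by $v$ on $[a,b]$, this gives
\begin{equation*}
\theta(b)-\theta(a)=k\tau_{1_+}-(b-a), \qquad \tilde\theta(b)-\tilde\theta(a)=k\tau_{2_+}-(b-a),
\end{equation*}
and combining the monotonicity of $\theta,\tilde\theta$ with~\eqref{res_condition:nbp} I would conclude that $k\in\{N,N+1\}$.

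Finally, if $k=N$ then $\theta(b)=\theta(a)$ forces $\dot\theta\equiv 0$; the strict inequality $\SH_1<\SH_2$ on $\R^2\setminus\{0\}$ (available in the setting of Theorem~\ref{Thm_res:nbp}) makes the bracket $2\SH_2(\varphi(\cdot))-1$ strictly positive, so $\alpha\equiv 1$ almost everywhere and $v$ solves $J\dot v=\nabla\SH_1(v)$. The case $k=N+1$ is symmetric and yields $J\dot v=\nabla\SH_2(v)$. The main obstacle will be the rigorous implementation of the polar-coordinate change under the sole $L^2$-regularity of $\alpha$ (so $v$ is only $H^1$, not $C^1$) and, more delicately, the justification that the half-turn count $k$ coincides for both polar representations; this last point is geometrically evident but requires a careful continuous-lift argument for the argument function of $v$ along $[a,b]$.
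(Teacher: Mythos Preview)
Your argument is correct and closely parallels the paper's up to the final step. The paper, like you, first shows a nontrivial $v$ never vanishes, then counts half-rotations (it does this with standard polar coordinates rather than your two generalized systems, but that is cosmetic) and concludes the count is either $N$ or $N+1$; in each case one generalized phase is constant, exactly your $\dot\theta\equiv 0$ or $\dot{\tilde\theta}\equiv 0$.

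The divergence is in how the conclusion is extracted. You deduce $\alpha\equiv 1$ (resp.\ $\alpha\equiv 0$) directly from $(1-\alpha)\bigl[2\SH_2(\varphi)-1\bigr]=0$ by invoking the \emph{strict} inequality $\SH_1<\SH_2$, which is not among the lemma's hypotheses (it only appears in Theorem~\ref{Thm_res:nbp}). The paper avoids this extra hypothesis: once the phase is frozen at $\omega_0$, it turns to the radial equation $\dot r=-r\,\alpha\,\langle\nabla\SH_1(\psi(t+\omega_0)),\dot\psi(t+\omega_0)\rangle$ and argues pointwise. At any $\bar t$ with $\alpha(\bar t)\neq 0$ the identity $\alpha\bigl(2\SH_1(\psi)-1\bigr)=0$ forces $\SH_1(\psi(\bar t+\omega_0))=\tfrac12=\SH_2(\psi(\bar t+\omega_0))$, so $\bar t$ is a maximum of the nonpositive $C^1$ map $s\mapsto \SH_1(\psi(s))-\SH_2(\psi(s))$; hence its derivative vanishes there, giving $\langle\nabla\SH_1(\psi),\dot\psi\rangle=0$. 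Either way $\dot r(\bar t)=0$, so $r$ is constant and $v=R_0\psi(t+\omega_0)$ solves $J\dot v=\nabla\SH_2(v)$, without ever forcing $\alpha$ to be identically $0$ or $1$. Your route is shorter and entirely adequate for Theorem~\ref{Thm_res:nbp}; the paper's criticality trick buys the lemma in the generality in which it is stated (only $\SH_1\le\SH_2$).

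Your two stated obstacles are not serious: since $v\in H^1$ is absolutely continuous and bounded away from $0$, the polar lifts $\theta,\tilde\theta$ are absolutely continuous and the phase ODEs hold a.e.; and the half-turn count $k$ is intrinsic to $v$ (it is the number of sign changes of $v_2$ on $(a,b]$, well defined because $\langle J\dot v,v\rangle>0$ a.e.\ forces strictly clockwise motion and transversal axis crossings), so it automatically coincides for both representations.
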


\begin{proof}
By a remark after \cite[Lemma 2.2]{FonGar2011JDE}, we observe that a nontrivial solution of \eqref{eq:Lemma} never reaches the origin. Indeed, if $v(t)$ solves \eqref{eq:Lemma} then also $s_0 v(t)$ does, for every $s_0>0$, thanks to the homogeneity of the right-hand side; moreover, since the right-hand side grows at most linearly in $v$, \cite[Lemma 2.2]{FonGar2011JDE} applies and also the remark after that lemma. It follows that, $v(\bar{t})\neq 0$ for some $\bar{t}\in [a,b]$, then $v(t)\neq 0$ for every $t\in [a,b]$.
\medbreak
Consequently, the usual system in polar coordinates $(\rho,\theta)$ is well defined for system \eqref{eq:Lemma}. Writing $v(t)=\rho(t)(\cos\theta(t),\sin\theta(t))$, recalling Euler's formula we get
$$
-\dot{\theta}(t)=2\alpha(t)H_1\big(\cos\theta(t),\sin\theta(t)\big)+2(1-\alpha(t))H_2\big(\cos\theta(t),\sin\theta(t)\big)\,.$$
Since $0\leq \alpha(t)\leq 1$, it follows that
\begin{equation}\label{theta:dot}
    -\frac{\dot{\theta}(t)}{2H_2(\cos\theta(t),\sin\theta(t))}\leq 1\leq -\frac{\dot{\theta}(t)}{2H_1(\cos\theta(t),\sin\theta(t))}\,,
\end{equation}
for almost every $t\in [a,b]$. Since $v(a)=0=v(b)$, $v$ performs an integer number of half rotations around the origin, say $m$ half rotations. Recalling that
$$
\int_0 ^\pi \frac{d\theta}{2H_1(\cos\theta,\sin\theta)}=\tau_{1_+}\,,\quad \int_0 ^\pi \frac{d\theta}{2H_2(\cos\theta,\sin\theta)}=\tau_{2_+}\,,
$$
integrating in \eqref{theta:dot} from $a$ to $b$, we get
$$
m\tau_{2_+} \leq b-a \leq m\tau_{1_+}\,,
$$
from which, using \eqref{res_condition:nbp},
$$
N= \frac{b-a}{\tau_{1_+}}\leq m \leq \frac{b-a}{\tau_{2_+}}= N+1\,.
$$
Since $m$ is integer, this gives a contradiction unless $m=N$ and $\tau_{1_+}=\frac{b-a}{N}$ or $m=N+1$ and $\tau_{2_+}=\frac{b-a}{N+1}$.
\medbreak
We only consider the first case, the other can be treated in a similar way. Introducing the generalized polar coordinates in \eqref{eq:Lemma} by writing $v(t)=r(t)\psi(t+\omega(t))$, we get
\begin{equation}\label{r:dot}
\dot{r}(t)=-r(t)\alpha(t)\big\langle \nabla H_1 \big(\psi(t+\omega(t))\big),\dot{\psi}(t+\omega(t))\big\rangle\,,
    \end{equation}
    and 
    \begin{equation}\label{omega:dot}
       \dot{\omega}(t) = \alpha(t)(2H_1(\psi(t+\omega(t))-1).
    \end{equation}
Since $\omega(a)=0=\omega(b)$, integrating in \eqref{omega:dot} from $a$ to $b$, we have
$$
0=\int_a^b \alpha(t)(2H_1(\psi(t+\omega(t))-1) dt\,,
$$
and from the fact that $-\alpha(t)(2H_1(\psi(t+\omega(t))-1)\geq 0$ for almost every $t\in [a,b]$, this expression satisfies
\begin{equation}\label{condition:equal:to:0}
    \alpha(t)(2H_1(\psi(t+\omega(t))-1)=0
\end{equation}
almost everywhere, that is to say, $\dot{\omega}(t)=0$ for almost every $t\in [a,b]$. Thus, since $\omega(t)$ is absolutely continuous, there exists $\omega_0 \in \R$ such that $\omega(t)=\omega_0$ for every $t\in [a,b]$. Concerning \eqref{r:dot}, we have
$$
\dot{r}(t)=-r(t)\alpha(t)\big\langle \nabla H_1 \big(\psi(t+\omega(t))\big),\dot{\psi}(t+\omega(t))\big\rangle\,.
$$
We now need to prove that $\dot{r}(t)=0$ for almost every $t\in [a,b]$, Indeed, if $t\in [a,b]$, \eqref{condition:equal:to:0} implies that either $\alpha(t)=0$, or $H_1 \big(\psi(t+\omega_0\big)=\frac{1}{2}$. If $\alpha(\bar{t})=0$, then $\dot{r}(\bar{t})=0$; on the other hand, if $\alpha(\bar{t})>0$, then $\bar{t}$ is a zero of the function $t\mapsto H_1 \big(\psi(t+\omega_0\big)-H_2 \big(\psi(t+\omega_0\big)$, which is of class $C^1$ and nonnegative. Necessarily $\bar{t}$ is then a minimum of this function, and so
$$
\frac{d}{dt}H_1 \big(\psi(t+\omega_0\big)_{|_{t=\bar{t}}}=\frac{d}{dt}H_2 \big(\psi(t+\omega_0\big)_{|_{t=\bar{t}}}=0\,,
$$
as $H_2$ is preserved along $\psi$. It follows that
$\big\langle \nabla H_1 \big(\psi(\bar{t}+\omega_0\big),\dot{\psi}(\bar{t}+\omega_0\big\rangle=0$, so that $\dot{r}(\bar{t})=0$. Summing up, $\dot{r}(t)=0$ for almost every $t\in [a,b]$, and, since $r(t)$ is absolutely continuous, this implies that $r(t)$ is constant; being $v(t)=R_0\varphi(t+\omega_0)$ for some nonnegative constant $R_0$, it follows that $v$ is a solution of $J\dot{v}=\nabla H_2(v)$\,. This concludes the proof.
\end{proof}
To conclude the proof of Theorem \ref{Thm_res:nbp}, one can follow the same lines of the proof of Theorem \ref{Thm_res}. We need to use Lemma \ref{Lemma:nbp} instead of \cite[Lemma 2.3]{FonGar2011JDE} in Proposition \ref{pro:for:priori:res} and at the end, we have to use ~\cite[Theorem 1.1]{FonUll2024NODEA} (for $p=q=2$) instead of ~\cite[Theorem 1.1]{FonUll2024DIE}.\qed

\medbreak

We can discuss variants of the above results as like Theorem \ref{Thm:variant:A4:nonres} and Theorem \ref{Thm:variant:A4:res}. We are not entering into detail in order to avoid repetition of arguments. Furthermore, we can adapt an example of application for this situation as in Section \ref{examples}.

\section{Open problems}\label{open} 
We conclude the paper by suggesting some open problems.

\medbreak

\noindent{\sl 1.} In \cite{CheQia2022JDE}, the coupling of twist dynamics with a resonant equation involving the Ahmad–Lazer–Paul condition has been addressed. However, under such
a condition, the scenario of double resonance remains unexplored, even when
dealing only with a scalar second order equation. Notice that the Ahmad–
Lazer–Paul condition does not guarantee an a priori bound as the one proved
in Proposition \ref{pro:for:priori}, as shown in \cite{BosGar2019RIMUT}. A similar result for Neumann-type boundary condition is still open for consideration.

\medbreak
 
\noindent{\sl 2.} In \cite{FonUll2024NODEA}, the authors studied about a Hamiltonian system coupled with positive and positively $(p,q)$-homogeneous Hamiltonian system. They have discussed both periodic and Neumann-type problems for the nonresonance case. It would be interesting to try the nonresonance/resonance cases by taking positive and positively $(p,q)$-homogeneous Hamiltonian functions in \eqref{covex_comb.}.

\noindent{\sl 3.} In \cite{FonMamSfeUll2024pre}, they investigated a system ruled by a  positively-$(p,q)$-homoge\-ne\-ous Hamiltonian function with a friction term and used a nonresonance condition developed by  Frederickson--Lazer \cite{FreLaz1969JDE}. We wonder whether in our setting, Frederickson--Lazer-condition could be applied.

\bigbreak

\ni {\bf Acknowledgement}. We are deeply grateful to Prof. Alessandro Fonda for generously allowing us to explore this research problem combined with his insightful guidance  and valuable discussions. We extend our sincere gratitude to Prof. Andrea Sfecci for his valuable comments.

 \newpage

\noindent Authors' addresses:

\bigbreak

\begin{tabular}{l}
Natnael Gezahegn Mamo and Wahid Ullah\\
Dipartimento di Matematica, Informatica e Geoscienze\\
Universit\`a degli Studi di Trieste\\
P.le Europa 1, 34127 Trieste, Italy\vspace{1mm}\\
e-mail: natnaelgezahegn.mamo@phd.units.it, 
 wahid.ullah@phd.units.it
\end{tabular}

\bigbreak

\noindent Mathematics Subject Classification:  34B15; 34C25

\medbreak

\noindent Keywords: Hamiltonian systems; periodic solutions; positively 2-homo\-geneous systems; Landesman-Lazer condition; Poincar\'e--Birkhoff Theorem; Resonance; Neumann-type boundary conditions.

\end{document}